\newcommand{\abs}{\mathfrak{M}}
\newcommand{\nnu}{\upvarpi}
\theoremstyle{plain}
\newtheorem{theorem}{Theorem}[section]
\newtheorem{corollary}[theorem]{Corollary}
\newtheorem{lemma}[theorem]{Lemma}
\newtheorem{proposition}[theorem]{Proposition}
\theoremstyle{definition}
\newtheorem{definition}[theorem]{Definition}
\newtheorem{remark}[theorem]{Remark}
\def\vrexp{\stackrel{\longrightarrow}{\exp}}
\newcommand{\rexp}[3]{\,  \vrexp \int_0^{#1} #2 \, d #3}
\newcommand{\rexpi}[4]{\,  \vrexp \int_{#1}^{#2} #3 \, d #4}
\newcommand{\deinde}[1]{\frac{\partial}{\partial #1}}
\newcommand{\ad}{\operatorname{ad}}
\newcommand{\vep}{\varepsilon}
\newcommand{\eps}{\varepsilon}
\renewcommand{\H}{\mathcal{H}}
\newcommand{\pro}{\Upsilon} 
\newcommand{\spann}{\mathrm{span}}
\newcommand{\NN}{\mathbb{N}}
\newcommand{\CC}{{\mathbb{C}}}
\newcommand{\RR}{{\mathbb{R}}}
\renewcommand{\l}{\ell}
\newcommand{\crop}{\operatorname{Crop}}
\newcommand{\Lg}{$L^{1}$-bounded}
\newcommand{\CCC}{Lie--Galerkin Control Condition}
\newcommand{\CSC}{Lie--Galerkin Stalking Condition}
\begin{document}

\title{
Multi-input 
{S}chr\"odinger equation:  
controllability, tracking, and application  to the quantum 
angular momentum 
\footnote{This research has been supported by the European Research Council, ERC
StG 2009 ``GeCoMethods'', contract number 239748, by the ANR project
GCM, program ``Blanc'',
project number NT09-504490}}

\author{Ugo Boscain\footnote{
{\indent 
CNRS, CMAP, \'Ecole Polytechnique, Palaiseau,   France, \& Team GECO, INRIA Saclay, 
{\tt ugo.boscain@polytechnique.edu}
}}
 \qquad
 Marco Caponigro 
 \footnote{{\indent
Conservatoire National des Arts et M\'eti\'ers, \'Equipe M2N, Paris, France, {\tt \small marco.caponigro@cnam.fr}
 }}
 \qquad
                Mario Sigalotti\footnote{\indent
INRIA Saclay, Team GECO \& 
CMAP, \'Ecole Polytechnique, Palaiseau,   France,
 {\tt mario.sigalotti@inria.fr}}
}

\maketitle

\begin{abstract}
We present a sufficient condition for approximate controllability of the bilinear discrete-spectrum Schr\"odinger equation exploiting the use of several controls. The controllability result extends to simultaneous  controllability, approximate controllability in $H^{s}$, and tracking in modulus.
 The result is more general than those present in the literature even in the case of one control and permits to treat situations in which the spectrum of the uncontrolled operator is very degenerate (e.g. it has multiple eigenvalues or equal gaps among different pairs of eigenvalues).
We apply the general result 
to a rotating polar linear molecule, driven by three orthogonal external fields. 
A remarkable property of this model is the presence of infinitely many degeneracies and resonances in the spectrum  preventing the application of the results in the literature.  
\end{abstract}

\noindent{\bf Keywords:}
Quantum control; bilinear Schr\"odinger equation; Galerkin approximations, quantum angular momentum.

\section{Introduction}

In this paper we study the controllability and the tracking problem for a multi-input bilinear Schr\"odinger equation
\begin{equation}
i\frac{d\psi}{dt}(t)=(H_0+u_1(t)H_1+\ldots +u_p(t)H_p)\psi(t)
\label{12}
\end{equation}
where $H_0, \ldots, H_{p}$ are self-adjoint operators on a Hilbert space $\cal H$ and  
the drift Schr\"odinger operator $H_0$ (the \emph{internal Hamiltonian})
has discrete spectrum. The control functions $u_1(\cdot),\ldots,u_p(\cdot)$, representing external fields, are real-valued  and $\psi(\cdot)$ takes values in the unit sphere of ${\cal H}$.

The controllability of system~(\ref{12}) is a well-established topic when the state space $\mathcal{H}$ is 
finite-dimensional (see for instance \cite{dalessandro-book} and reference therein), thanks to general controllability methods for left-invariant control systems on compact Lie groups (\cite{brock, jur,  JK81, GB82, AGK96}).

When $\mathcal{H}$ is infinite-dimensional,  it is known  that the bilinear Schr\"odinger equation is not exactly controllable (see \cite{BMS,turinici}). Hence, one has to look for weaker controllability properties as, for instance, approximate controllability or controllability between  eigenstates of the internal Hamiltonian $H_{0}$ (which are the most relevant physical states).
In certain cases, when  the space variable is one-dimensional, a description of reachable sets has been provided (see~\cite{beauchard-coron, camillo}).
In dimension larger than one  or for more general situations, the exact description of the reachable set appears to be more difficult and at the moment only approximate controllability results are available. 
Most of them are for the single-input case 
(see, in particular, \cite{noi, mirrahimi-aihp, Nersy, nersesyan,beauchard-nersesyan,  ancoranoi, fratelli-nersesyan}).
Such results are based on sufficient conditions for controllability that are generic~\cite{PriSig10, MasonSigalotti2010, nersesyan} even in the case $p=1$. Nevertheless, in many examples interesting for applications these conditions cannot be directly applied or controllability fails to hold, as a consequence of the symmetries of the system. Symmetries can induce degeneracies in the spectrum 
(e.g. multiple eigenvalues or presence of equal spectral gaps)  and reduce the coupling of eigenstates via the control. This happens, for instance, in a planar rotating molecule controlled by only one control~\cite[Section~8]{ancoranoi} which is not (approximately) controllable. 

The use of more than one control opens new controllability horizons.

Controllability results with more than one input have been obtained  for specific systems~\cite{ervedoza_puel,brockettetc} and some general approximate controllability results between eigenfunctions have been proved via adiabatic methods \cite{Boscain_Adami,adiabatiko}.
The first multi-input result via Lie-algebraic methods is given in~\cite[Section~8]{ancoranoi} where the problem of the  spectral degeneracies in the planar rotating molecule has been overcome associating with every $1$-dimensional slice of the set of admissible controls an invariant subspace of the state space $\H$ on which the single-input controllability result applies.
Anyhow, such a technique does not apply for more general cases. In the case of a rotating rigid symmetric $3$D molecule this application of this method is obstructed by the fact that 
eigenspaces may have arbitrarily large dimension, since at every higher energy level new degeneracies appear. In some sense  the strategy of~\cite[Section~8]{ancoranoi} does not fully exploit the potentialities of the geometric method based on the controllability of the Galerkin approximations.

In this paper,  we present a sufficient condition for controllability of the discrete-spectrum bilinear Schr\"odinger equation which can be applied to cases in which the spectrum of the internal Hamiltonian $H_{0}$ is very degenerate. The results fully exploit the use of more than one control and extend to simultaneous controllability, approximate controllability in $H^s$, and stalking. 
Proving that a system is a stalker (i.e. it permits to track in modulus any given trajectory; for precise definitions see Section~\ref{sec:mainresults}) is a crucial issue when describing systems  containing  dissipative levels (and the dissipation is not taken into account in the mathematical model). In this case, a strategy is to keep the population of the dissipative levels as low as possible during the transitions in order to minimize the effects of the dissipation (see for instance the STIRAP model~\cite{CH90, VHSB01, Gauthier_Jauslin}). 

The result presented in this paper is more general than those present in the literature even in the single-input case.  For instance it applies to the Laplace--Dirichlet operator on a compact interval of $\RR$ 
with a control term  of the type $u(t)x$. 
Let us mention that in~\cite{ancoranoi}, approximate simultaneous controllability of this model has been proved breaking the degeneracy between gaps among eigenstates through perturbation techniques.
Here we prove the approximate simultaneous controllability and stalking without perturbation arguments. 
In this framework, proving a controllability result without perturbation arguments 
allows to translate directly the constructive proof of the main result in an algorithm that provides explicit expressions of controls~\cite{Qtrack}.

\subsection{Brief description of the general results}

The main result of the paper is a sufficient condition for approximate simultaneous controllability 
which we call the \CCC\ (see Definition~\ref{def:CCC}).

Roughly speaking, both the sufficient condition proposed in~\cite{ancoranoi} and the one presented here are based on the idea of driving the system with control laws in resonance with spectral gaps of the internal Hamiltonian $H_{0}$. However, whereas in~\cite{ancoranoi} the only actions on the system obtained by resonance which are exploited for the controllability are those corresponding to elementary transitions between two eigenstates, no such a restriction is imposed in the \CCC\ (see Section~\ref{sec:buca}).

The \CCC\ ensures strong controllability properties for the Galerkin approximations. Indeed it provides controllability for a fixed Galerkin approximation while avoiding the transfer of population to higher energy levels. This allows estimates on the difference between the dynamics of the finite-dimensional Galerkin approximation and the ones of the original infinite-dimensional system. The \CCC\  also ensures a  bound on the $L^{1}$ norm of the control achieving controllability which is uniform with respect to the prescribed tolerance, when the required transfer is between finite combinations of eigenstates.

Under the \CSC, a slight modification of the \CCC, we can prove that any trajectory can be tracked in modulus (see Theorem~\ref{metalemma-tracking}). 

The \CCC\ under the additional assumption that the system is $s$-weakly coupled (see Definition~\ref{def:wc}) as introduced in~\cite{weakly}, 
allows to conclude that the system is controllable in $H^{s/2}$ (see Theorem~\ref{thm:controlHs}).

\subsection{Application to the quantum angular momentum}

Rotational molecular dynamics is one of the most important examples of quantum systems with an infinite-dimensional Hilbert space and a discrete spectrum. 
%
Molecular orientation and alignment are well-established topics in the quantum control of molecular dynamics both from the experimental and the theoretical point of view (see \cite{stapelfeldt,sugny,seideman} and references therein). For linear molecules driven by linearly polarized laser fields in gas phase, alignment means an increased probability direction along the polarization axis whereas orientation requires in addition the same (or opposite) direction as the polarization vector. 
A large amount of numerical simulations have been done in this domain but the mathematical part is not yet fully understood. From this perspective, the controllability problem is 
a necessary step towards comprehension.

 We focus in this paper on the control by external fields of the rotation of a rigid linear molecule in $\RR^3$. This control problem corresponds to the control of the Schr\"odinger equation on the unit sphere $S^2$. 
 We show that the system driven by three fields along the three axes is  
 approximately  controllable
 for arbitrarily small controls.

Up to normalization of physical quantities (in particular, in units such that $\hbar=1$), the dynamics are governed by the equation
\begin{align}
i\frac{\partial\psi(\theta,\varphi,t)}{\partial t}= -\Delta \psi(\theta,\varphi,t) + (u_1(t) \sin \theta\cos \varphi
+ u_2(t) \sin \theta\sin \varphi+ u_3(t) \cos \theta)\psi(\theta,\varphi,t)
\label{1}
\end{align}
where $\theta,\varphi$ are the spherical coordinates, which are related to the Euclidean coordinates through the identities 
$$
x = \sin \theta \cos \varphi, \quad y = \sin \theta\sin \varphi,\quad z = \cos \theta,
$$
while 
$\Delta$ is the Laplace--Beltrami operator on the sphere $S^2$ (called in this context the \emph{angular momentum operator}), 
i.e., 
$$
\Delta = \frac{1}{\sin \theta} \deinde{\theta} \left( \sin \theta \deinde{\theta}\right) + \frac{1}{\sin^2 \theta} \frac{\partial^2}{\partial \varphi^2}.
$$

The wavefunction $\psi$ evolves in the unit sphere ${\cal S}$ of ${\cal H}=L^2(S^2,\CC)$.

As a consequence of the general multi-input result presented in Section~\ref{sec:mainresults} we have that~\eqref{1} is approximately controllable with arbitrarily small controls. A stronger statement, including simultaneous controllability in $H^{s}$ and stalking, 
is given in Section~\ref{rotational}.
\begin{theorem}
\label{controllability}
For every  $\psi^0$, $\psi^1$  belonging to ${\cal S}$  and every $\delta>0$,
 there exist $T>0$ and $u\in L^\infty([0,T],[0,\delta]^{3})$, such that
 the solution $\psi(\cdot)$ of equation \eqref{1}, corresponding to the control $u$ and with initial condition 
 $\psi_0$, 
satisfies
 $\|\psi^1-\psi(T)\|<\eps$.
\end{theorem}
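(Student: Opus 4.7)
The plan is to verify the \CCC\ for \eqref{1} and then invoke the general theorem of Section~\ref{sec:mainresults}. The drift $H_0=-\Delta$ has the spherical harmonics $\{Y_\ell^m:\ell\geq 0,\,-\ell\leq m\leq\ell\}$ as an eigenbasis, with eigenvalues $\lambda_\ell=\ell(\ell+1)$ of multiplicity $2\ell+1$. The consecutive gaps $\lambda_{\ell+1}-\lambda_\ell=2(\ell+1)$ are distinct from each other, but each is shared by many pairs of eigenvectors: this is precisely the kind of spectral degeneracy the \CCC\ was designed to handle.

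The coupling operators $H_1=x$, $H_2=y$, $H_3=z$ form the components of a rank-one spherical tensor operator. By the Wigner--Eckart theorem (or a direct computation from the Legendre recurrence) their nonzero matrix elements in the spherical harmonic basis satisfy the selection rules $\Delta\ell=\pm 1$, $|\Delta m|\leq 1$: $H_3$ connects $Y_\ell^m$ with $Y_{\ell\pm 1}^m$, while the combinations $H_1\pm iH_2$ connect $Y_\ell^m$ with $Y_{\ell\pm 1}^{m\pm 1}$. Each such elementary transition is resonantly driven by the single frequency $2(\max(\ell,\ell')+1)$, and the graph on the index set $\{(\ell,m)\}$ whose edges are the pairs with nonzero coupling (through at least one of the three controls) is connected.

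To verify the \CCC\ I would take as Galerkin subsets $S_n=\{(\ell,m):\ell\leq n\}$. For each $n$, choosing control laws that are finite sums of sinusoids with frequencies among $\{2,4,\dots,2n\}$ (shifted by small constant offsets to force positivity and keep the amplitudes below $\delta$) resonantly activates all transitions within $S_n$ and none that would leak population into the levels $\ell\geq n+1$, whose relevant gap $2(n+1)$ lies strictly above the ones used. Within $S_n$ the decisive algebraic input is that the $SO(3)$-covariance of the triple $(H_1,H_2,H_3)$ produces three linearly independent inter-level couplings between the irreducible blocks $V_\ell$ and $V_{\ell+1}$; a Clebsch--Gordan inspection, combined with iterated commutators that move diagonal matrices across off-diagonal blocks, then shows that the Lie algebra they generate acts irreducibly on each Galerkin block. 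This is the step I expect to be the main obstacle: one must carefully rule out any residual $SO(3)$-invariant subspace surviving inside $S_n$, in order to conclude that arbitrary unitaries of the Galerkin truncation are reachable. Once the \CCC\ is established, the general theorem of Section~\ref{sec:mainresults} yields approximate controllability with controls of arbitrarily small amplitude, and a standard shift-and-rescale argument places them in $[0,\delta]^3$.
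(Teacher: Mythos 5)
Your high-level plan is the same as the paper's: verify the Lie--Galerkin Control Condition for system~\eqref{1} using the selection rules $\Delta\ell=\pm 1$, $|\Delta m|\leq 1$, observe that the gap $2(\ell+1)$ is the only nonzero gap coupling $\ell$ to $\ell+1$, and then invoke Theorem~\ref{metalemma}. Two differences, one stylistic and one substantive.

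The stylistic difference is the choice of Galerkin block. You take all levels $\ell\leq n$. The paper's proof uses only the two consecutive shells $\ell\in\{\l,\l+1\}$, a $(4\l+4)$-dimensional block, with a single resonant frequency $2(\l+1)$. This is a genuine simplification: a two-shell block has only two distinct gaps in $\Sigma_n$ (namely $0$ and $2(\l+1)$), so the set $\mathcal{V}_n^0$ is minimal and the Lie-algebraic verification is tractable, while still satisfying "for every $n_0$ there is $n>n_0$."

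The substantive issue is the gap you yourself flag: the proof that the activated matrices generate $\mathfrak{su}$ of the truncation. Neither of your two heuristics closes it. Graph connectivity of the coupling pattern is the right criterion in the \emph{nondegenerate}, single-input setting, where each elementary transition can be driven independently; but here many pairs $(\ell,m)\leftrightarrow(\ell+1,m')$ share the gap $2(\ell+1)$, so what you control is not a single matrix unit but the entire matrix $\mathcal{E}_{2(\ell+1)}(B_j^{(N)})$, which simultaneously drives all of them with fixed coefficients $p_{\l,m}$ and $q_{\l,m}$. This is precisely the degeneracy the \CCC\ is built to confront, and connectivity alone is not an argument. The $SO(3)$-covariance observation is also insufficient, and in fact points in the opposite direction: an $SO(3)$-covariant family could in principle generate a proper subalgebra of $\mathfrak{su}(n)$ (the angular momentum operators themselves act irreducibly on each $V_\ell$ yet generate only a copy of $\mathfrak{su}(2)$). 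Irreducibility of the representation does not imply that the generated Lie algebra is full. The paper's Lemma~\ref{prop:liemolecule} settles this by explicit computation: it uses the Vandermonde trick on $\mathrm{ad}_{B_3}^{2j}J_i(B_3)$ to extract the symmetric sums $E_{(\l,-m),(\l+1,-m)}+E_{(\l,m),(\l+1,m)}$, then pries apart the remaining degeneracies by iterated double brackets against $B_2\pm J_i(B_1)$, eventually isolating every elementary matrix $E_{(\l,m),(\l+1,m+j)}$ and $F_{(\l,m),(\l+1,m+j)}$ for $j=-1,0,1$. Without an argument of this kind your proof is incomplete; as written you have correctly set up the reduction to Theorem~\ref{metalemma} but left the essential algebraic lemma unproved.
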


There are two main difficulties preventing the application to this system results previously  in the literature.
Firstly, we deal here with several control parameters, while those general results were specifically conceived for the single-input case. Notice that, because of symmetry obstructions, equation~\eqref{1} is not controllable with only two of the three controls $u_1$, $u_2$, $u_3$.  
Secondly, the general theory developed in \cite{noi, nersesyan,ancoranoi} is based on 
nonresonance conditions on the spectrum of the internal Hamiltonian. 
The Laplace--Beltrami operator on $S^2$, however, has a severely degenerate spectrum, since the $\ell$-th eigenvalue $-i\ell(\ell+1)$ has multiplicity $2\ell+1$.
In \cite{noi} we proposed a perturbation technique in order to overcome resonance relations in the spectrum of the drift. This technique was applied in \cite{noicdc} to the case of the orientation of a molecule confined in a plane driven by one control.  The planar case is already technically 
challenging and a generalization of the same technique to the case of three controls in 3D seems  hard to achieve.

\subsection{Structure of the paper}
The structure of the paper
is the following: in the next section 
we present the general multi-input abstract framework and the main abstract results.
In Section~\ref{rotational} we  apply them to system \eqref{1}. 
The proofs of the abstract results are contained in Sections~\ref{sec:prova1}, \ref{sec:tracking}, and \ref{6}.

\section{Framework and main results}\label{sec:mainresults}

Let $p\in \NN$, $\delta>0$, and $U=U_1\times \cdots \times U_p$ with either $U_j=[0,\delta]$ or $U_j=[-\delta,\delta]$.

\begin{definition}\label{def:system}
Let $\cal H$ be an infinite-dimensional Hilbert space with scalar product $\langle \cdot ,\cdot \rangle$ and  $A,B_{1}, \ldots,B_{p}$ be (possibly unbounded) skew-adjoint operators on $\H$,
with domains $D(A),$ $D(B_{1}), \ldots, D(B_{p})$. 
Let us introduce the 
controlled equation
\begin{equation} \label{eq:main}
\frac{d\psi}{dt}(t)=(A+u_1(t)B_1+ \cdots + u_p(t) B_p) \psi(t),  \quad u(t) \in U. 
\end{equation}

We say that $A$ satisfies  ($\mathbb{A}1$) if the following assumption is true:
\begin{description}
\item[($\mathbb{A}1$)] $A$ has discrete spectrum with infinitely many distinct eigenvalues (possibly degenerate).
\end{description}

Denote by $ \Phi$ a Hilbert basis $(\phi_k)_{k \in \NN}$  of $\cal H$ 
made of eigenvectors of $A$ associated with the family of eigenvalues $(i \lambda_{k})_{k \in \mathbb{N}}$ and  let $\mathcal{L}$ be the set of finite linear combinations of eigenstates, that is,
$$
\mathcal{L} = \bigcup_{k \in \NN}  \mathrm{span}\{\phi_{1},\ldots,\phi_{k}\}.
$$

We say that $(A,B_{1}, \ldots, B_{p},U,\Phi)$ satisfies $(\mathbb{A})$ 
if $A$ satisfies ($\mathbb{A}1$) and the following assumptions hold:
\begin{description}
\item[($\mathbb{A}2$)] $\phi_k \in D(B_{j})$ for every $k \in
\mathbb{N}, j=1,\ldots,p $;
\item[($\mathbb{A}3$)] 
$A+u_1B_1+ \cdots + u_p B_p:\mathcal{L} \to \mathcal{H}$ is essentially skew-adjoint for every $u\in U$.
\end{description}
\end{definition}

If $(A,B_{1}, \ldots, B_{p},U,\Phi)$ satisfies 
$(\mathbb{A})$ 
then, for every $(u_{1},\ldots,u_{p}) \in U$, $A+u_{1}B_{1} + \cdots + u_{p} B_{p}$ generates a subgroup $e^{t(A+u_{1}B_{1} + \cdots + u_{p} B_{p})}$ of the 
group of unitary operators $\mathbf{U}(\H)$. 
It is therefore possible to define the propagator $\pro^{u}_{T}$ at time $T$ of system~\eqref{1} associated with a $p$-uple of piecewise constant controls $u(\cdot)=(u_{1}(\cdot), \ldots, u_{p}(\cdot))$ by composition of flows of the type $e^{t(A+u_{1}B_{1} + \cdots + u_{p} B_{p})}$. If, moreover, $B_{1},\ldots, B_{p}$ are bounded operators then the definition can be extended by continuity to every $L^{\infty}$ control law (see~\cite[Theorem 2.5]{BMS}).

\begin{definition}\label{def:controllability}
Let $(A,B_{1}, \ldots, B_{p},U,\Phi)$ satisfy 
$(\mathbb{A})$.
We say that \eqref{eq:main} is \emph{approximately controllable}
if for every $\psi_0,\psi_1$ in the unit sphere of $\cal H$
 and every $\vep>0$ there exists a piecewise constant control function $u:[0,T] \to U$ such that 
$
\|\psi_1-  \pro^{u}_{T}(\psi_{0})\| <\vep.
$
\end{definition}

\begin{definition} \label{DEF_simultaneous_controllability}
Let $(A,B_{1}, \ldots, B_{p},U,\Phi)$ satisfy $(\mathbb{A})$. 
We say that \eqref{eq:main} is \emph{approximately simultaneously controllable} if for every $r$ in $ \NN$, $\psi_1,\ldots,\psi_{r}$ in $\H$, $\hat\Upsilon$ in $\mathbf{U}(\H)$, and $\vep>0$  there exists a piecewise constant control $u:[0,T]\rightarrow U$
such that
$$
\left \| \hat\Upsilon \psi_k - \pro^{u}_T \psi_k \right \|<\vep,\qquad k=1,\dots,r.
$$
If, moreover, 
for every $\psi_1,\ldots,\psi_{r} \in \mathcal{L}$ and $\hat\Upsilon \in \mathbf{U}(\H)$ such that 
$\hat\Upsilon \psi_{1}, \ldots, \hat\Upsilon \psi_{r}  \in \mathcal{L}$, 
there exists $K>0$ (not depending on $\vep$)
such that $u$ can be chosen to satisfy, in addition,  $\|u\|_{L^{1}} \leq K$, we say that 
 \eqref{eq:main} is \emph{\Lg\ approximately simultaneously controllable}. 
\end{definition}

This last definition of controllability  with  \emph{a priori} bound on the $L^{1}$-norm of the control achieving controllability has been observed in preceding works~\cite{ancoranoi,periodic}. It implies a stronger controllability property as shown in Section~\ref{sec:Hs}.

Due to presence of the internal Hamiltonian and the boundedness of the controls, it is not possible  in general to track, with arbitrarily precision, an unfeasible curve in $\mathcal{S}$. We introduce, then, the notion of \emph{stalker}, that is a system for which it is possible to track any given curve up to phases (both for a single initial condition and  in the spirit of simultaneous control).
This definition makes sense from the physical point of view, since tracking up to phases means imposing the population of all energy levels of $H_{0}$ along the evolution.

The identification up to phases of elements of $\H$ in the basis $\Phi = (\phi_{k})_{k\in \NN}$ can be introduced through the projection 
 $$
 \abs:\psi\mapsto \sum_{k\in \NN}|\langle \phi_k,\psi\rangle|\phi_k.
 $$

\begin{definition} \label{def:trackable}
Let $(A,B_{1}, \ldots, B_{p},U,\Phi)$ satisfy $(\mathbb{A})$.
We say that~\eqref{eq:main} is a \emph{stalker} if for every $r$ in $ \NN$, $\psi_1,\ldots,\psi_{r}$ in $\H$, $\hat\Upsilon:[0,T] \to \mathbf{U}(\H)$ continuous, with $\hat\Upsilon_{0} = \mathrm{Id}_{\H}$, and $\vep>0$  there exist 
an invertible continuous function
$\tau:[0,T] \to [0,T_{\tau}]$ 
and
a
piecewise constant control $u:[0,T_{\tau}]\rightarrow U$
such that
$$
\left \| \abs(\hat\Upsilon_{t} \psi_k) - \abs (\pro^{u}_{\tau(t)} \psi_k) \right \|<\vep,\qquad k=1,\dots,r,
$$
for every $t \in [0,T_\tau]$.
\end{definition}

\subsection{Notation}

For every $n$ in $\NN$, define the orthogonal projection
$$
 \pi_n: \H \ni \psi\mapsto \sum_{k=1}^{n} \langle \phi_k,\psi\rangle
\phi_k \in \H.
$$
Given a linear operator $Q$ on $\H$ we identify the linear operator
$\pi_{n} Q \pi_{n}$ preserving 
$\spann\{\phi_{1},\ldots, \phi_{n}\}$ with 
its  
$n \times n$ complex matrix representation with 
respect to the basis $(\phi_{1},\ldots, \phi_{n})$.
We define
$$
A^{(n)} = \pi_{n} A \pi_{n} \quad \mbox{ and } \quad B_j^{(n)} = \pi_{n} B_{j} \pi_{n},
$$
for every $j=1,\ldots,p$.

Let us introduce the set $\Sigma_{n}$ 
of spectral gaps associated with the $n$-dimensional Galerkin approximation as
$$
\Sigma_n = \{|\lambda_l - \lambda_k| \mid l,k = 1, \ldots, n\}.
$$

For every $\sigma \geq 0$, every $m\in\NN$, and every $m\times m$ matrix $M$, let
$$
{\cal E}_\sigma(M)=  (M_{l,k} \delta_{\sigma, |\lambda_l - \lambda_k|})_{l,k=1}^m.
$$
The $n\times n$ matrix ${\cal E}_\sigma(B^{(n)}_j)$, $j=1,\dots,p$, corresponds to the  ``activation'' of the spectral gap $\sigma$: it reflects the action of the convexification procedure detailed in the following sections, which annihilates all the matrix elements $(B^{(n)}_j)_{l,k}$ such that 
$|\lambda_l-\lambda_k|\ne \sigma$.

Define
\begin{equation}\label{sigmabar}
\Xi_n=\left\{ (\sigma,j) \in \Sigma_n \times \{1,\dots,p\}\mid \exists M\in \mathfrak{u}(n)  \mbox{ s.t. }\mathcal{E}_\sigma(B_j^{(N)}) = 
\left(
\begin{array}{c|c}
M&0\\ \hline 0 & *
\end{array}
\right)
\mbox{ for every } N>n
\right\}.
\end{equation}
The matrices $\mathcal{E}_\sigma(B_j^{(n)})$ for $(\sigma,j)\in\Xi_n$ 
correspond to 
``compatible dynamics'' for the $n$-dimensional Galerkin approximation (\emph{compatible}, that is, with higher dimensional Galerkin approximations).

\subsection{Controllability results}

Let
\begin{align*}
 \mathcal{V}_{n}^0 &= 
\left\{A^{(n)} 
\right\}
\cup \left\{{\cal E}_\sigma(B^{(n)}_j)\mid
(\sigma,j)\in \Xi_n 
\mbox{ and $j$ is such that }
(0,j)\in\Xi_n
\right\}\\
& \qquad 
\cup
\left\{ 
{\cal E}_\sigma(B^{(n)}_j)\mid
(\sigma,j)\in \Xi_n, \sigma \neq 0, U_{j} = [-\delta, \delta]
 \right\}.
\end{align*}

\begin{definition}\label{def:CCC}
Let $(A,B_{1}, \ldots, B_{p},U,\Phi)$ satisfy $(\mathbb{A})$. We say that the \emph{\CCC}\ holds if 
for every $n_0\in \mathbb{N}$ there exists $n> n_0$ 
such that
\begin{equation}\label{hypothesis}
\mathrm{Lie} \mathcal{V}_{n}^0 \supseteq \mathfrak{su}(n).
\end{equation}
\end{definition}

\begin{theorem}[Abstract multi-input controllability result]\label{metalemma}
Assume that $(\mathbb{A})$ holds true.
If the \CCC\ holds
then the  system
$$
\dot{x} = (A+ u_1B_1+ \cdots + u_p B_p)x,\quad u\in U,
$$
is \Lg\ approximately simultaneously controllable.
\end{theorem}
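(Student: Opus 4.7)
The plan is a Galerkin reduction argument: we will transport the $L^1$-bounded simultaneous controllability from the $n$-dimensional truncation to the full infinite-dimensional system, provided $n$ is chosen large enough that the \CCC\ gives a Lie-algebraic controllability certificate and small enough that the interesting target vectors still live in the truncated space. Concretely, fix $\psi_1,\dots,\psi_r\in \mathcal{L}$ and $\hat\Upsilon\in\mathbf{U}(\H)$ with $\hat\Upsilon\psi_1,\dots,\hat\Upsilon\psi_r\in \mathcal{L}$, and pick $n_0$ so that all these vectors lie in $\spann\{\phi_1,\dots,\phi_{n_0}\}$. By the \CCC\ choose $n>n_0$ with $\mathrm{Lie}\,\mathcal{V}_n^0\supseteq\mathfrak{su}(n)$.

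The second step is the convexification (rotating-wave / averaging) argument, which is the heart of the matter and the reason $\mathcal{V}_n^0$ is defined the way it is. I would prove, as a standalone lemma, that for every $M\in\mathcal{V}_n^0$ and every $t\ge 0$ the flow $e^{tM}$ on $\pi_n\H$ can be approximated arbitrarily well, in operator norm and \emph{uniformly in the dimension $N\ge n$ of the ambient Galerkin space}, by propagators $\pi_n\pro^{u}_T\pi_n$ of \eqref{eq:main} generated by piecewise constant controls $u:[0,T]\to U$ whose $L^1$-norm is bounded by a constant depending only on $t$ and $M$. For $A^{(n)}$ one takes $u\equiv 0$. For ${\cal E}_\sigma(B_j^{(n)})$ with $\sigma\neq 0$ one oscillates $u_j$ between $0$ and $\delta$ (or $-\delta$ and $\delta$) with period $2\pi/\sigma$: passing to the interaction picture via conjugation by $e^{tA}$ turns the time-dependent Hamiltonian into something whose time average is proportional to ${\cal E}_\sigma(B_j^{(n)})$, because the matrix elements with $|\lambda_l-\lambda_k|\neq \sigma$ rotate at nonzero frequencies and average to zero. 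The two clauses in the definition of $\mathcal{V}_n^0$ correspond precisely to whether we can center the oscillation around zero (when $U_j=[-\delta,\delta]$) or must absorb the DC component into the dynamics (when $(0,j)\in\Xi_n$ provides the necessary block compatible with the ambient Galerkin space). The crucial point is that, by the very definition of $\Xi_n$, the matrix ${\cal E}_\sigma(B_j^{(N)})$ acts on the first $n$ coordinates independently of the rest, so the convexified dynamics on $\spann\{\phi_1,\dots,\phi_n\}$ do not leak to higher modes, uniformly in $N$.

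Given this lemma, the third step is essentially the standard Jurdjevic--Sussmann controllability theorem on the compact group $\mathbf{U}(n)$ : since $\mathrm{Lie}\,\mathcal{V}_n^0\supseteq\mathfrak{su}(n)$, finite compositions of the approximated flows from Step~2 are dense in $\mathbf{U}(n)$ modulo an overall phase, so we can realize $\hat\Upsilon|_{\pi_n\H}$ up to a global phase with piecewise constant controls whose total $L^1$-norm is bounded by a constant $K$ depending only on $\hat\Upsilon$ and $n$ but not on the precision $\varepsilon$. The final step is to lift this approximation to the full system: the error between $\pi_n\pro^u_T\pi_n$ acting on $\psi_k\in\mathrm{span}\{\phi_1,\dots,\phi_{n_0}\}\subset\pi_n\H$ and $\pro^u_T\psi_k$ is controlled by an $L^1$-type estimate involving $\|(\mathrm{Id}-\pi_n)B_j\pi_{n_0}\|$; uniform $L^1$ bounds from Step~2 translate, via the Duhamel formula in the interaction picture, into a leakage bound that is small because the activated blocks ${\cal E}_\sigma(B_j^{(n)})$ are block-diagonal with respect to the splitting $\pi_n\H\oplus(\mathrm{Id}-\pi_n)\H$. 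The main technical obstacle is this last estimate: one must verify that the averaging error (from working with piecewise constant rather than truly resonant sinusoidal controls) and the leakage error can be made simultaneously small while keeping the $L^1$-norm uniformly bounded, which requires choosing the oscillation period and amplitude jointly as $\varepsilon\to 0$ rather than sequentially.
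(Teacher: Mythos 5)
Your overall architecture---Galerkin reduction, convexification to realize the generators in $\mathcal{V}_n^0$, Lie-group controllability of the truncation, and a Duhamel estimate against leakage exploiting the block structure encoded by $\Xi_n$---is the right blueprint, and you correctly identify $\Xi_n$ as the mechanism that keeps the convexified dynamics from leaking to high modes uniformly in $N$. There is, however, a genuine gap at the center of Step~2, and its downstream consequence is not patched by the ``up to a global phase'' remark in Step~3.

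The gap: you claim that for each $M\in\mathcal{V}_n^0$ one can approximate the flow $e^{tM}$ itself by truncated propagators $\pi_n\pro^u_T\pi_n$ of~\eqref{eq:main}. For $M\ne A^{(n)}$ this is not achievable. Whatever piecewise constant control you use, the propagator of~\eqref{eq:main} factors (up to small error) as $e^{T A}$ composed with an interaction-picture evolution, so the best one can produce on $\pi_n\H$ is of the form $e^{\theta A^{(n)}}e^{tM}$ with $\theta$ an uncontrolled accumulated drift phase. The factor $e^{\theta A^{(n)}}=\operatorname{diag}(e^{i\lambda_1\theta},\dots,e^{i\lambda_n\theta})$ is generically far from a scalar multiple of the identity: it rotates each component through a different angle. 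Declaring in Step~3 that you recover $\hat\Upsilon$ ``up to a global phase'' therefore does not close the argument; the discrepancy is a torus' worth of relative phases, not a single $U(1)$ factor, and you give no mechanism for controlling it.

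The paper resolves this in two moves you do not reproduce. First, the convexification is performed entirely in the interaction frame, after a time-reparametrization that accelerates the drift ($z\ge 1$, $v_j=u_j z\in U_j$) and conjugates by $e^{-\omega(t)A}$. The auxiliary system~\eqref{eq:main-interaction} then has $A$ switched by a $\{0,1\}$-valued $\alpha$ and a free phase variable $\omega$ with $\dot\omega\ge 1$; crucially, $\|u_j\|_{L^1}=\int|v_j|\,dt\le T$, so the $L^1$ bound is decoupled from the averaging precision instead of being traded off against it (your final paragraph correctly senses this tension, but the paper removes it entirely). This also sidesteps your worry about higher harmonics of a piecewise-constant carrier: Lemma~\ref{lem:convexification} selects discrete sampling times $\tau_k$ so that the resonant gap averages to $\nnu\xi$ while \emph{every} other frequency in $\Sigma_N$ averages to zero, rather than relying on an approximate sinusoid. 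Second, controllability of~\eqref{eq:main-interaction} only gives controllability of~\eqref{eq:main} \emph{in modulus}; passing back to genuine controllability is the content of Proposition~\ref{prop:implies}, whose proof steers $\psi_0^j$ to $\phi_{k_j}$ in modulus, does the same backwards from the targets, and then inserts a free-drift interval whose length is chosen via the $\eta$-density of $t\mapsto(e^{i\lambda_{k_1}t},\dots,e^{i\lambda_{k_r}t})$ in the $r$-torus (invoking~\cite[Lemma~6.3]{ancoranoi}). This torus-density phase-fixing step is what reconciles the interaction picture with the physical evolution, and without it---or some replacement---your argument does not produce a control realizing $\hat\Upsilon$ to precision $\varepsilon$.
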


\subsection{Stalking results}

For every $\xi \in S^{1} \subset \CC$, consider the matrix operator $J_{\xi}$ such that
$$
\left(J_{\xi}(M)\right)_{j,k} = 
\begin{cases}
\xi M_{j,k} & \mbox{ if } \lambda_j<\lambda_k\\
0  & \mbox{ if }  \lambda_j=\lambda_k\\
\bar\xi M_{j,k} & \mbox{ if } \lambda_j>\lambda_k.
\end{cases}
$$

Let
$$
 \mathcal{V}_{n} = 
\left\{J_{\xi}({\cal E}_\sigma(B^{(n)}_j))\mid
(\sigma,j)\in \Xi_n, \sigma \neq 0, \xi\in S^{1}
\right\}.
$$

Notice that $\mathcal{V}_{n} \subset \mathfrak{su}(n)$.

\begin{definition}\label{def:CSC}
Let $(A,B_{1}, \ldots, B_{p},U,\Phi)$ satisfy $(\mathbb{A})$. 
We say that the \emph{\CSC}\ holds if 
for every $n_0\in \mathbb{N}$ there exists $n> n_0$ 
such that
\begin{equation}\label{hypothesis-t}
\mathrm{Lie} \mathcal{V}_{n} = \mathfrak{su}(n).
\end{equation}
\end{definition}

\begin{theorem}[Abstract multi-input tracking result]\label{metalemma-tracking}
Let $U_j=[-\delta,\delta]$ for some $\delta>0$ and every $j=1,\ldots,p$.
Assume 
that $(\mathbb{A})$ holds true. 
If the \CSC\ holds
then the  system
$$
\dot{x} = (A+ u_1B_1+ \cdots + u_p B_p)x,\quad u\in U,
$$
is a stalker.
\end{theorem}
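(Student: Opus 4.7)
The plan is to combine the convexification and Lie-algebraic saturation strategy behind Theorem~\ref{metalemma} with the interaction-picture freedom encoded in the map $J_\xi$. The key conceptual point is that tracking in modulus requires matching the populations $|\langle \phi_\ell,\pro^u_{\tau(t)}\psi_k\rangle|^2$, which are insensitive to the action of any diagonal unitary $\mathrm{diag}(e^{i\alpha_1},\dots,e^{i\alpha_n})$; accordingly, it is enough to synthesize a target unitary on the Galerkin truncation up to such a diagonal transformation, and the Lie algebra for this reduced problem is exactly $\mathfrak{su}(n)$, which is assumed to be generated by $\mathcal{V}_n$. First I would truncate in dimension: the image of the continuous curve $t\mapsto\hat\Upsilon_t\psi_k$ on $[0,T]$ is compact in $\H$, so the strong convergence $\pi_n\to\mathrm{Id}$ gives $\|\pi_n\hat\Upsilon_t\psi_k-\hat\Upsilon_t\psi_k\|<\vep/4$ uniformly in $t$ and $k=1,\dots,r$ for $n$ large enough; since $\abs$ is $1$-Lipschitz on $\H$, the same bound transfers to the moduli. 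Enlarging $n$ further, I would invoke the \CSC\ so that $\mathrm{Lie}\,\mathcal{V}_n=\mathfrak{su}(n)$.

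Next I would discretize time: picking a fine partition $0=t_0<t_1<\cdots<t_N=T$, continuity yields small unitary increments between $\pi_n\hat\Upsilon_{t_i}\pi_n$ and $\pi_n\hat\Upsilon_{t_{i+1}}\pi_n$. On each sub-interval the task is to approximately realize, on $\pi_n\H$, a prescribed coset in $\mathbf{U}(n)/\mathbb{T}^n$; by saturation applied to the compact Lie group $\mathrm{SU}(n)$ together with $\mathrm{Lie}\,\mathcal{V}_n=\mathfrak{su}(n)$, this reduces to implementing flows of matrices of the form $J_\xi(\mathcal{E}_\sigma(B_j^{(n)}))$ for $\xi\in S^1$, $(\sigma,j)\in\Xi_n$, $\sigma\neq 0$. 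The core dynamical step is to realize these flows by piecewise-constant controls. For each such triple I would use a control $u_j(t)=2\alpha\cos(\sigma t+\theta)$, approximated to arbitrary precision by piecewise-constant functions with values in $[-\delta,\delta]$ (this is where the two-sided assumption $U_j=[-\delta,\delta]$ is used); passing to the interaction picture generated by $A^{(n)}$ and applying a standard rotating-wave/averaging estimate, the slow effective generator on $\pi_n\H$ becomes $\alpha J_{e^{i\theta}}(\mathcal{E}_\sigma(B_j^{(n)}))$, with error vanishing as the oscillation period shrinks. The compatibility $(\sigma,j)\in\Xi_n$ ensures that the activated gap does not couple $\pi_n\H$ to its orthogonal complement, preventing spillage of mass during averaging, so the effective reduction closes on the Galerkin truncation. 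The total time used to implement the $N$ blocks defines $T_\tau$ and the reparametrization $\tau:[0,T]\to[0,T_\tau]$.

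The main obstacle is controlling, uniformly in time on the entire interval $[0,T_\tau]$ and not only at the partition points, the combined rotating-wave/averaging and Galerkin truncation errors. Since tracking in modulus is a continuous-time requirement, the moduli must not deviate by more than $\vep$ during the transients of each control block, even though averaging naturally produces good bounds only over time scales much longer than the oscillation period. This dictates a careful interplay between how fast $\tau$ can run, how short the oscillation period must be, and how small the spillage from $\pi_n\H$ is; it is also here that one must quantify the loss of the diagonal part of the dynamics, using the fact that moduli are invariant under the maximal torus to absorb the phases introduced both by the interaction-picture conjugation and by any residual contribution of $A^{(n)}$. Once these uniform estimates are assembled, concatenating the $N$ control blocks and gluing the reparametrizations yields the stalker property.
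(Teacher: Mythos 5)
Your outline captures the right geometric skeleton: pass to an interaction picture that erases $A$ while preserving the moduli in the basis $\Phi$, invoke the \CSC\ to get $\mathrm{Lie}\,\mathcal{V}_n=\mathfrak{su}(n)$, and realize the generators $J_\xi(\mathcal E_\sigma(B_j^{(n)}))$ by activating the gap $\sigma$ with resonant inputs, while the block structure of $\Xi_n$ controls spillage out of $\pi_n\H$. That is indeed the spine of the paper's argument. But the step you flag as ``the main obstacle'' — uniform-in-time control of the transient errors across $[0,T_\tau]$, not just at partition points — is precisely the part of the proof that the paper does not leave to a standard rotating-wave estimate, and you leave it unresolved. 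The paper circumvents it by a different realization mechanism: instead of oscillating the physical controls $u_j(t)=2\alpha\cos(\sigma t+\theta)$, it keeps the interaction-picture controls $v$ piecewise constant and pushes all the oscillation into the time-reparametrization $\omega$. The convexification Lemma~\ref{lem:convexification} is then applied to produce a sequence $(\omega_h)$ such that the time-\emph{integral} $\int_0^t\big(\alpha A^{(N)}+\Theta^{(N)}(\omega_h,v)\big)\,ds$ converges to $\int_0^t M(s)\,ds$ uniformly in $t$ (Lemma~\ref{lem:0000-t}); uniform propagator convergence then follows from \cite[Lemma~8.2]{book2}, and Proposition~\ref{prop:2410-t} transfers this uniformity through the Galerkin truncation. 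This is exactly the uniform estimate you acknowledge you need but do not supply; an averaging argument that is only good ``over time scales much longer than the oscillation period'' does not, as written, yield it, and one would also have to account for the piecewise-constant approximation of the cosine. So there is a genuine gap at the heart of your sketch.

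Two smaller remarks. First, your reduction to a ``prescribed coset in $\mathbf{U}(n)/\mathbb{T}^n$'' is not quite what is needed: $\mathbf{U}(n)/\mathbb{T}^n$ is a homogeneous space, not a Lie group, so speaking of ``the Lie algebra for this reduced problem'' is imprecise. The paper sidesteps this by reducing (via Proposition~\ref{prop:tommaso}) to genuine tracking of a continuous curve in $SU(n)$, with the modulus invariance handled once and for all by the identity~\eqref{eq:1309} of the interaction picture. Second, the $SU(n)$ tracking step in the paper is not a ``saturation'' argument but an application of the Rashevski--Chow theorem to the \emph{driftless} auxiliary system $\dot x=M(t)x$, $M(t)\in\nnu\mathcal{V}_n$; the driftless structure (every admissible time-reversal is admissible) is exactly what makes tracking, rather than mere reachability, possible, and it is worth making that explicit. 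Finally, the factor $\nnu<1$ coming out of the convexification is absent from your sketch; it is harmless for the qualitative statement but must appear in any quantitative version.
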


\begin{remark}
If $U_{j} = [-\delta,\delta]$ for every $j=1,\ldots,p$,
then
the \CSC\ implies the \CCC, as it follows from the relation 
$$
\left[ A^{(n)},\mathcal{E}_{\sigma}(B_{j}^{(n)}) \right] = \sigma J_{i}(\mathcal{E}_{\sigma}(B_{j}^{(n)})).
$$
\end{remark}

\subsection{Controllability in higher norms}\label{sec:Hs}

We define for $s > 0$,
$$
|A|^{s} \psi = \sum_{n\in \NN} |\lambda_{n}|^{s} \langle \psi, \phi_{n}\rangle \phi_{n}
$$
for every $\psi$ belonging to
$$
D(|A|^{s}) = \left\{ \psi \in \H \mid \sum_{n\in \NN} |\lambda_{n}|^{2s} |\langle \psi, \phi_{n}\rangle|^{2} < +\infty  \right\}.
$$
For every $\psi \in D(|A|^{s})$ we can define  the $|A|^{s}$-norm (or simply $s$-norm) of $\psi$ by $\|\psi\|_{{s}} = \||A|^{s} \psi\|$. If $A$ is the Laplace--Dirichlet operator on some bounded domain of $\RR^{n}$ then the $s$-norm is equivalent to the $H^{2s}$-norm  on $D(|A|^{s})$.

\begin{definition}
Let $(A,B_1,\ldots,B_p, U, \Phi)$ satisfy Assumption~$(\mathbb{A})$ and let $s>0$. 
System~\eqref{eq:main} is
\emph{approximately simultaneously controllable} (respectively \emph{approximately controllable}) \emph{for the $s$-norm} if for every $\varepsilon>0$, $r \in \NN$ (respectively $r=1$),
$\psi_1,\ldots,\psi_r$ in $D(|A|^{s})$, and $\hat{\Upsilon} \in \mathbf{U}(\H)$  such that 
$\hat\Upsilon \psi_{1}, \ldots, \hat\Upsilon \psi_{r} \in D(|A|^{s})$ 
there exists a piecewise constant function
$u_\varepsilon: [0,T_\varepsilon]\to \RR$ such that
$$
\|\hat{\Upsilon}\psi_j-\Upsilon_{T_\varepsilon}^{u_\varepsilon}
\psi_j\|_s<\varepsilon,
$$
for every $j=1,\ldots,r$. 
\end{definition}

We say that $(A,B_{1}, \ldots, B_{p}, U, \Phi)$ satisfies $(\mathbb{A}')$
if it satisfies ($\mathbb{A}$) and the following additional assumptions hold:
\begin{itemize}
\item[$(\mathbb{A}4)$] the operator $i(A+u_{1}B_{1}+\cdots + u_{p}B_{p} )$ is bounded from below for every $u\in \RR^{p}$;
\item[$(\mathbb{A}5)$]  the sequence $(\lambda_{k})_{k \in \NN}$ is non-increasing and unbounded.
\end{itemize}

\begin{definition}\label{def:wc}
Let $(A,B_1,\ldots,B_p, U, \Phi)$  satisfy Assumption~$(\mathbb{A}')$ and
let $s>0$. Then $(A,B_1,\ldots,B_p)$ is
\emph{$s$-weakly-coupled}
if 
$D(|A+u_{1}B_{1}+\cdots+ u_{p}B_{p}|^{s/2}) = D(|A|^{s/2})$ for every $u \in \RR^{p}$ and  there exists $C$ such that
$$
|\Re\langle |A|^{s} \psi,B_{l} \psi \rangle| \leq C|\langle |A|^{s}\psi, \psi\rangle|
$$
for every $l=1,\ldots,p$, $\psi \in D(|A|^{s})$.
\end{definition}

The following result is a consequence of~\cite[Proposition~2]{weakly} and can be obtained by adapting the arguments proposed in \cite[Proposition~5]{weakly}. We provide its proof in Section~\ref{6}.

\begin{theorem}\label{thm:controlHs}
Let $(A,B_1,\ldots,B_p, U, \Phi)$  satisfy Assumption~$(\mathbb{A}')$ and $(A,B_1,\ldots,B_p)$ be $s$-weakly coupled for some $s>0$.
 If~\eqref{eq:main} is \Lg\ approximately simultaneously  controllable 
 then it is approximately simultaneously  controllable
for the $s/2$-norm.
\end{theorem}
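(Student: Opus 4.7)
The strategy is to combine the \Lg\ approximate simultaneous controllability from the hypothesis (which yields only $L^{2}$-convergence) with the \emph{a priori} estimate from~\cite[Proposition~2]{weakly}: under the $s$-weakly coupled assumption, for every $K>0$ there exists $C_{K}>0$ such that
\[
\|\Upsilon^{u}_{T}\psi\|_{s/2}\le C_{K}\|\psi\|_{s/2}
\]
for every $\psi\in D(|A|^{s/2})$ and every piecewise constant control $u$ with $\|u\|_{L^{1}}\le K$. This bound arises from a Gr\"onwall argument applied to $t\mapsto\langle|A|^{s}\psi(t),\psi(t)\rangle$, exploiting that $A$ commutes with $|A|^{s}$ and contributes only purely imaginary cross-terms, so that only the $B_{l}$-terms—bounded via the $s$-weakly coupled inequality—drive the growth.

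Given $\psi_{1},\ldots,\psi_{r}\in D(|A|^{s/2})$, $\hat\Upsilon\in\mathbf{U}(\H)$ with $\hat\Upsilon\psi_{j}\in D(|A|^{s/2})$, and $\varepsilon>0$, I would first approximate in $s/2$-norm by truncations: for $n$ large, $\tilde\psi_{j}:=\pi_{n}\psi_{j}$ and $\tilde\phi_{j}:=\pi_{n}\hat\Upsilon\psi_{j}$ lie in $\pi_{n}\mathcal{L}$ and are within $\varepsilon$ of $\psi_{j}$ and $\hat\Upsilon\psi_{j}$, respectively, in the $s/2$-norm. Their Gram matrices both converge to the common value $(\langle\psi_{j},\psi_{k}\rangle)=(\langle\hat\Upsilon\psi_{j},\hat\Upsilon\psi_{k}\rangle)$ as $n\to\infty$, so an $s/2$-small unitary correction inside $\pi_{n}\mathcal{L}$ allows one to assume that the two Gram matrices coincide exactly, and to define $\tilde\Upsilon\in\mathbf{U}(\H)$ as the unitary extension of the isometry $\tilde\psi_{j}\mapsto\tilde\phi_{j}$ by the identity on $(\Id-\pi_{n})\H$; in particular $\tilde\Upsilon\mathcal{L}\subseteq\mathcal{L}$ and $\tilde\Upsilon\tilde\psi_{j}=\tilde\phi_{j}\in\pi_{n}\mathcal{L}$. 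Applying the \Lg\ approximate simultaneous controllability to $\tilde\psi_{j}$ and $\tilde\Upsilon$, for any prescribed $\eta>0$ I obtain a piecewise constant control $u$ with $\|u\|_{L^{1}}\le K$ (with $K$ independent of $\eta$) satisfying $\|\Upsilon^{u}_{T}\tilde\psi_{j}-\tilde\phi_{j}\|<\eta$ for every $j$.

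The triangle inequality combined with the \emph{a priori} bound reduces the task to estimating $\|\Upsilon^{u}_{T}\tilde\psi_{j}-\tilde\phi_{j}\|_{s/2}$, for which only $L^{2}$-smallness is known while $s/2$-norms are merely bounded. Splitting this difference along $\pi_{n}$ and $\Id-\pi_{n}$, the low-frequency term is dominated by $|\lambda_{n}|^{s/2}\eta$ and is made small by choosing $\eta$ small once $n$ is fixed, while the high-frequency term reduces to $\|(\Id-\pi_{n})\Upsilon^{u}_{T}\tilde\psi_{j}\|_{s/2}$ because $\tilde\phi_{j}\in\pi_{n}\mathcal{L}$. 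This tail is the main obstacle, since~\cite[Proposition~2]{weakly} controls only the total $s/2$-norm and not its tail beyond mode $n$. Following~\cite[Proposition~5]{weakly}, I would handle it by running the Gr\"onwall argument instead on $\|(\Id-\pi_{N})\Upsilon^{u}_{T}\tilde\psi_{j}\|_{s/2}$ for $N\ge n$, starting from the vanishing initial condition $(\Id-\pi_{N})\tilde\psi_{j}=0$ and controlling the source terms—produced by the off-diagonal blocks of the $B_{l}$'s connecting low to high modes and tempered by inverse powers of the spectral gap—in terms of the total $s/2$-norm. This yields a bound of the form $\|(\Id-\pi_{N})\Upsilon^{u}_{T}\tilde\psi_{j}\|_{s/2}\le\omega(N,K)\,\|\tilde\psi_{j}\|_{s/2}$ with $\omega(N,K)\to 0$ as $N\to\infty$ uniformly in $\|u\|_{L^{1}}\le K$. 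Choosing first $n$ large so that the $s/2$-approximation errors are below $\varepsilon/3$, then $N\ge n$ so that the tail is below $\varepsilon/3$ uniformly for all admissible $u$, and finally $\eta$ small so that the low-frequency term is below $\varepsilon/3$, yields the desired $s/2$-norm convergence.
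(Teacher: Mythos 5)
Your overall architecture matches the paper's: approximate the data by elements of $\mathcal{L}$, invoke L$^1$-bounded approximate simultaneous controllability in $L^2$ to obtain a control with $\|u\|_{L^1}\le K$, apply the a priori estimate $\|\Upsilon^u_T\psi\|_{s/2}\le C^K\|\psi\|_{s/2}$ from~\cite[Proposition~2]{weakly}, and close with the triangle inequality. So the route is the same one the paper takes in Section~\ref{6}.

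Where you go further than the paper is in the $\mathcal{L}$-step. The paper's written proof passes from $\|\hat\Upsilon\psi_j-\Upsilon^u_T\psi_j\|<\varepsilon$ to smallness in $s/2$-norm via the finite-dimensional norm equivalence~\eqref{eq:equivalenza} on $\mathrm{span}\{\phi_1,\dots,\phi_N\}$ and introduces no separate tail estimate. You correctly point out that~\eqref{eq:equivalenza} only controls $\pi_N(\hat\Upsilon\psi_j-\Upsilon^u_T\psi_j)$, and that the high-frequency remainder $\|(\Id-\pi_N)\Upsilon^u_T\psi_j\|_{s/2}$ needs its own argument; indeed, $L^2$-smallness combined with a uniform $s/2$-norm bound yields, by interpolation, only $s'$-norm smallness for $s'<s/2$, not $s/2$-norm smallness. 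The uniform tail decay $\|(\Id-\pi_N)\Upsilon^u_T\psi_j\|_{s/2}\le\omega(N,K)\|\psi_j\|_{s/2}$ with $\omega(N,K)\to 0$ that you posit is therefore the real crux, and it is plausibly what the paper means by ``adapting the arguments proposed in~\cite[Proposition~5]{weakly}''. In your proposal this estimate is asserted rather than derived, so that step remains a sketch; but it is a gap the paper's own Section~\ref{6} shares rather than resolves. Your Gram-matrix correction, ensuring that the truncated sources and targets are joined by a genuine unitary $\tilde\Upsilon$ as required by Definition~\ref{DEF_simultaneous_controllability}, is another legitimate technical repair that the paper leaves implicit.
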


As a direct consequence we have the following result generalizing~\cite[Proposition~5]{weakly}.

\begin{corollary}\label{cor:qwerty}
Let $(A,B_1,\ldots,B_p, U, \Phi)$  satisfy Assumption~$(\mathbb{A}')$ and $(A,B_1,\ldots,B_p)$ be $s$-weakly coupled for some $s>0$.
If the \CCC\ holds
then  system~\eqref{eq:main}
is approximately simultaneously  controllable
for the $s/2$-norm.
\end{corollary}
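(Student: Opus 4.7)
The plan is to chain together the two main abstract theorems already established in the excerpt. The hypotheses of Corollary~\ref{cor:qwerty} assume Assumption~$(\mathbb{A}')$, $s$-weak coupling, and the Lie--Galerkin Control Condition. Since $(\mathbb{A}')$ contains $(\mathbb{A})$ as a subset of its requirements, Theorem~\ref{metalemma} applies directly and yields that system~\eqref{eq:main} is \Lg\ approximately simultaneously controllable in the sense of Definition~\ref{DEF_simultaneous_controllability}. Concretely, for every finite family $\psi_{1},\ldots,\psi_{r}\in\mathcal{L}$ and every $\hat\Upsilon\in\mathbf{U}(\H)$ sending this family into $\mathcal{L}$, one obtains piecewise constant controls approximating the transfer $\psi_j\mapsto\hat\Upsilon\psi_j$ in $\H$-norm to arbitrary precision, with an a priori bound $K$ on the $L^{1}$-norm that is uniform in the required tolerance.

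Once this is in hand, I would simply invoke Theorem~\ref{thm:controlHs}: its three hypotheses are $(\mathbb{A}')$, $s$-weak coupling, and \Lg\ approximate simultaneous controllability, all of which are now satisfied. Its conclusion is exactly the approximate simultaneous controllability for the $s/2$-norm claimed by Corollary~\ref{cor:qwerty}. No further argument is needed; the corollary is obtained by a pure concatenation of the two theorems.

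The only genuinely non-trivial step in the chain is hidden inside Theorem~\ref{thm:controlHs}, whose proof is deferred to Section~\ref{6}. The subtlety there is that Theorem~\ref{metalemma} provides the $L^{1}$-bounded control only for source and target data lying in the dense subspace $\mathcal{L}$ of finite spectral combinations, while the $s/2$-norm statement must hold for arbitrary $\psi_{j}\in D(|A|^{s/2})$. Bridging this gap requires truncating the data in $\mathcal{L}$, applying the uniform $L^{1}$-bounded control from Theorem~\ref{metalemma}, and then using the $s$-weak coupling inequality to bound the growth of the $s$-norm along the evolution in terms of $\|u\|_{L^{1}}$ via a Gronwall-type argument; the crucial role of the uniform $K$ is precisely to prevent this growth from blowing up as the tolerance tends to zero. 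Since this bridging argument is carried out inside Theorem~\ref{thm:controlHs} itself, the proof of Corollary~\ref{cor:qwerty} reduces to the one-line composition described above.
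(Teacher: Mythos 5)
Your proof is correct and matches the paper's intended argument exactly: the corollary is obtained by composing Theorem~\ref{metalemma} (CCC implies \Lg\ approximate simultaneous controllability) with Theorem~\ref{thm:controlHs} (\Lg\ approximate simultaneous controllability plus $s$-weak coupling implies approximate simultaneous controllability in the $s/2$-norm), and the paper itself presents the corollary as a ``direct consequence'' of the latter. Your added remark about the density/Gronwall argument hidden in Section~\ref{6} correctly identifies where the real work lives, but it is part of Theorem~\ref{thm:controlHs}'s proof, not of the corollary's.
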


\subsection{Example: the infinite potential well}\label{sec:buca}

We present the case of a particle confined in the interval $(-1/2,1/2)$ as a toy model to compare  the result in~\cite{ancoranoi} and Theorem~\ref{metalemma-tracking} on a single-input system.
The model has been extensively studied by several authors in the last decade and it has been the first quantum system for which a positive controllability result has been obtained (see~\cite{beauchard-coron}).
 In~\cite{ancoranoi} an approximate  simultaneous controllability has been obtained with geometric methods and using perturbations techniques. Indeed this model presents several resonances  
preventing the direct application of the results in~\cite{ancoranoi}. 

The Schr\"{o}dinger equation writes
\begin{equation}\label{EQ_potential_well}
 i \frac{\partial \psi}{\partial t}=-\frac{1}{2} \frac{\partial^2 \psi}{\partial x^2} - u(t) x \psi(x,t)
\end{equation}
with the boundary conditions $\psi(-1/2,t)=\psi(1/2,t)=0$ for every $t \in \RR$. 
We consider controls $u(\cdot)$ piecewise constant with values in $U=[-\delta,\delta]$ for some $\delta>0$.

In this case $\H=L^2 \left ( (-1/2,1/2), \CC \right )$ endowed with the Hermitian product $\langle \psi_1,\psi_2\rangle= \int_{-1/2}^{1/2} \overline{\psi_1(x)} \psi_2(x) dx$. The operators $A$ and $B=B_{1}$ are defined by $A\psi= i\frac{1}{2} \frac{\partial^2 \psi}{\partial x^2}$ for every $\psi$ in 
$D(A)= (H_2 \cap  H_0^1 )\left((-1/2,1/2), \CC\right)$ and $B\psi=i x \psi$.        
A complete set of eigenfunctions of $A$ is 
$$ 
\phi_k(x)=
\begin{cases}
 \sqrt{2} \cos(k\pi x)& \mbox{ when } k \mbox{ is odd,}\\
 \sqrt{2} \sin(k\pi x)& \mbox{ when } k \mbox{ is even,}
\end{cases}
\qquad
k \in \NN,
$$
associated with the eigenvalues $i\lambda_k=  - i\frac{k^2 \pi^2}{2}, k \in \NN$.
 Notice that
$$
\langle \phi_{j}, B \phi_{k} \rangle \neq 0
$$
if and only if $j+k$ is odd. In particular $\mathcal{V}_{n} \subset\mathfrak{su}(n) $ for every $n$.

We prove by induction on $n$ that 
$
\mathrm{Lie} \mathcal{V}_{n} = \mathfrak{su}(n)$, and hence that the \CSC\ is fulfilled.
Notice that the matrices $\mathcal{E}_{2k-1}(B^{(N)})$, for $k \leq N$, have only zero elements in the positions $(j,l)$ for $j\leq N$ and $l\geq N+1$, since
$$
 l^{2}-j^{2} \geq (N+1)^{2} - N^{2} = 2N+1 > 2k-1.
$$
Hence $\mathcal{E}_{2k-1}(B^{(N)}) \in \mathcal{V}_{N}$ for $k=1,\ldots,N$. We prove the claim by showing that 
\begin{equation}\label{disamina}
\mathrm{Lie}\left( \{\mathcal{E}_{2k-1}(B^{(n)})\mid k=1,\ldots,n\} \right) = \mathfrak{su}(n).
\end{equation}
For $n=2$,
$\mathcal{E}_{3}(B^{(2)}) = 
 \begin{pmatrix}
 0 & b_{12}\\
 -\bar b_{12} & 0
 \end{pmatrix}
$
generates $\mathfrak{su}(2)$ because $b_{12}\neq0$.

Now assume that
$$ 
\mathrm{Lie}\left( \{\mathcal{E}_{2k-1}(B^{(n-1)})\mid k=1,\ldots,n-1\} \right) = \mathfrak{su}(n-1),
$$
and let us prove~\eqref{disamina}.
The matrices  $\mathcal{E}_{2k-1}(B^{(n)})$ are in $ \mathfrak{su}(n)$  for every $k=1,\ldots,n-1$ and generate the subalgebra of matrices in $ \mathfrak{su}(n)$  with zero elements in the $n$th row and $n$th column.
In particular there exists $M \in \mathrm{Lie}\left( \{\mathcal{E}_{2k-1}(B^{(n)})\,:\,k=1,\ldots,n-1\} \right) $
such that $M+\mathcal{E}_{2n-1}(B^{(n)}) $ has only two  nonzero elements in the positions $(n-1,n)$ and $(n,n-1)$. So
\begin{align*}
&\mathrm{Lie} (  \{\mathcal{E}_{2k-1}(B^{(n)})\mid k=1,\ldots,n\} )  \\
&\qquad \supset \mathrm{Lie} \left(  \{\mathcal{E}_{2k-1}(B^{(n)})\mid k=1,\ldots,n-1\}  \cup \{M+\mathcal{E}_{2n-1}(B^{(n)})\} \right)
 = \mathfrak{su}(n).
\end{align*}
Therefore, thanks to Theorems~\ref{metalemma} and~\ref{metalemma-tracking}, system~\eqref{EQ_potential_well} is approximately simultaneously controllable and a stalker.

\section{The 3D molecule}
\label{rotational}

Let us go back to the system presented in the introduction for the orientation of a linear molecule, that is,  
\begin{equation}\label{mol!}
i \hbar \dot \psi = -\Delta \psi + (u_1 \cos \theta + u_2 \cos \varphi\sin \theta+ u_3 \sin \varphi\sin \theta)\psi,
\end{equation}
where $\psi(t) \in \H=L^2({S}^2,\CC)$.

A basis of eigenvectors of the Laplace--Beltrami operator $\Delta$ is given by  
the spherical harmonics $Y^m_\l(\theta, \varphi)$, which satisfy
$$
\Delta Y^m_\l (\theta,\varphi) = -\l(\l+1)Y^m_\l(\theta, \varphi).
$$
The spectrum of $A = i\Delta$ is $\{ -i \l(\l+1)\mid \l\in \NN\}$. Each eigenvalue $-i \l(\l+1)$ is of finite multiplicity $2\l+1$.
Therefore $A$ satisfies Assumptions~$(\mathbb{A}1)$ and $(\mathbb{A}5)$.
Using the notations of the preceding  sections we set $B_{1}$, $B_{2}$, $B_{3}$ to be the multiplication operators by  $-i \cos \varphi\sin \theta$,  $-i\sin \varphi\sin \theta$, $-i \cos \theta$ respectively. 
Being $B_{1}$, $B_{2}$, $B_{3}$ bounded,  conditions $(\mathbb{A}2)$, $(\mathbb{A}3)$, and $(\mathbb{A}4)$ hold. Hence $(\mathbb{A}')$ is satisfied. Moreover, as proved in \cite[Proposition 8]{weakly}, \eqref{mol!} is $s$-weakly coupled for every $s>0$. 
The main goal of this section is to prove that system~\eqref{mol!} satisfies the \CSC. As a consequence we obtain the following result, whose corollary is Theorem~\ref{controllability}. 

\begin{theorem}
System \eqref{mol!} is:
\begin{enumerate} 
\item[$(i)$] \Lg\ 
approximately simultaneously controllable,
\item[$(ii)$]
approximately simultaneously controllable
 in $H^s$ for every $s> 0$, 
\item[$(iii)$] a stalker.
\end{enumerate}
\end{theorem}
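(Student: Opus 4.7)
All abstract assumptions have already been verified above: $(\mathbb{A}')$ holds and, by \cite[Proposition~8]{weakly}, the system is $s$-weakly coupled for every $s>0$. It therefore suffices to prove that \eqref{mol!} satisfies the \CSC; the remark after Definition~\ref{def:CSC} then supplies the \CCC, Theorems~\ref{metalemma} and \ref{metalemma-tracking} yield $(i)$ and $(iii)$, and Corollary~\ref{cor:qwerty} gives $(ii)$.

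Order the Hilbert basis $\Phi$ shell by shell, so that the first $n_L := (L+1)^2$ vectors form a basis of $V_0 \oplus \cdots \oplus V_L$, where $V_\ell := \spann\{Y_\ell^{-\ell},\ldots,Y_\ell^{\ell}\}$. The classical spherical-tensor selection rules for the vector operator $(-i\sin\theta\cos\varphi, -i\sin\theta\sin\varphi, -i\cos\theta)$ give $\langle Y_{\ell'}^{m'},B_jY_\ell^{m}\rangle=0$ unless $\ell'=\ell\pm 1$, so every nonzero matrix element of each $B_j$ corresponds to a gap of the form $\sigma_\ell := |\lambda_{\ell+1}-\lambda_\ell| = 2(\ell+1)$, and these gaps are pairwise distinct in $\ell$. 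Hence $\mathcal{E}_{2(\ell+1)}(B_j^{(N)})$ is supported entirely in the off-diagonal $V_\ell \leftrightarrow V_{\ell+1}$ block, and $(2(\ell+1),j)\in\Xi_{n_L}$ for every $\ell = 0,\ldots,L-1$ and $j=1,2,3$. In particular the severe degeneracy of the Laplace--Beltrami spectrum is not an obstruction: although within-shell gaps vanish, the activated gaps $2(\ell+1)$ are simple.

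The heart of the proof is an induction on $L$ for the identity $\mathrm{Lie}\,\mathcal{V}_{n_L}=\mathfrak{su}(n_L)$. The algebraic input is the pair of facts: (a) for every $\ell\geq 0$ the three coupling blocks $X_j^{(\ell)}:=\pi_{V_{\ell+1}}B_j\pi_{V_\ell}$ are $\CC$-linearly independent in $\mathrm{Hom}(V_\ell,V_{\ell+1})$, and (b) their joint column-span equals all of $V_{\ell+1}$. Both follow from Wigner--Eckart: the change of basis from $(B_1,B_2,B_3)$ to the spherical components $(T^{(1)}_{-1},T^{(1)}_0,T^{(1)}_{+1})$ is invertible, and each $T^{(1)}_q$ couples $V_\ell$ to $V_{\ell+1}$ through nonvanishing Clebsch--Gordan coefficients covering all allowed $m$-shifts. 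The base case $L=1$ is a direct $4\times 4$ calculation. For the inductive step I would proceed through: (i) varying $\xi\in S^1$ in $J_\xi$, which scales each coupling block by arbitrary complex phases and thus produces the full $\CC$-span of $X_1^{(L-1)},X_2^{(L-1)},X_3^{(L-1)}$; (ii) bracketing these scaled blocks with the $\mathfrak{su}(2L-1)$-action on $V_{L-1}$ inherited from the inductive hypothesis, which corresponds to right-multiplication of the coupling by $\mathfrak{su}(2L-1)$, and using (b) to conclude that every skew-Hermitian matrix supported in the $V_{L-1}\leftrightarrow V_L$ block is recovered; (iii) bracketing such off-diagonal matrices among themselves, producing all of $\mathfrak{u}(2L+1)$ acting block-diagonally on $V_L$; and (iv) bracketing with cross-shell elements already present in $\mathfrak{su}(n_{L-1})$, which spreads the coupling to every $V_\ell \leftrightarrow V_L$ with $\ell < L$. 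Together these yield $\mathfrak{su}(n_L)$.

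The main technical obstacle is the spreading argument in (ii): one must check that, starting from only three initial complex directions in $\mathrm{Hom}(V_{L-1},V_L)$ and with bracketing restricted to skew-Hermitian elements of the inductive $\mathfrak{su}(n_{L-1})$, one can generate all of $\mathrm{Hom}(V_{L-1},V_L)$. The crucial point is that after applying $J_\xi$ the three initial directions become $\CC$-linearly independent in the full complex sense, so that right-multiplication by the complexification $\mathfrak{sl}(2L-1,\CC)$ of $\mathfrak{su}(2L-1)$ fills out $\mathrm{Hom}(V_{L-1},V')$ where $V'$ is the joint column-span of the three $X_j^{(L-1)}$; fact (b) then identifies $V'$ with $V_L$. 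Without all three controls this column-span would collapse for some $m$-level, and the argument would break down, mirroring the symmetry obstruction that rules out two-control approaches for \eqref{mol!}.
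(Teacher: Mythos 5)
Your proof plan is correct in outline and would, if fully written out, establish the \CSC\ for~\eqref{mol!}, but it follows a genuinely different route from the paper's. The paper avoids any induction over shells: for each $\ell$ it reorders the spherical harmonics so that the \emph{first} $4\ell+4$ basis vectors are exactly $V_\ell\oplus V_{\ell+1}$, and Lemma~\ref{prop:liemolecule} then shows by a closed-form bracket computation that $B_j^{(4\ell+4)}$ and $J_i(B_j^{(4\ell+4)})$, $j=1,2,3$, generate $\mathfrak{su}(4\ell+4)$. The key devices there are a Vandermonde argument on the coefficients $p_{\ell,m}$ (which isolates the sums $E_{(\ell,-m),(\ell+1,-m)}+E_{(\ell,m),(\ell+1,m)}$) and a recursive peeling of the combinations $B_2\pm J_i(B_1)$ and $B_1\pm J_i(B_2)$; no representation-theoretic input is invoked. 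Your cumulative ordering $n_L=(L+1)^2$ has the structural advantage of keeping a single Hilbert basis $\Phi$ fixed across all Galerkin levels (the paper's two-shell reordering implicitly re-chooses $\Phi$ with $\ell$), and your induction exposes the representation-theoretic reason the result holds: joint column span of the spherical-tensor coupling blocks over $V_L$, together with $\RR$-irreducibility of $\CC^{n}$ under $\mathfrak{su}(n)$. That said, a full writeup would still need to (a) justify the module-theoretic claim in your step~(ii) — that once the joint column span of $X_1^{(L-1)},X_2^{(L-1)},X_3^{(L-1)}$ is $V_L$, the real $\mathfrak{su}(n_{L-1})$-module they generate in $\mathrm{Hom}(\CC^{n_{L-1}},V_L)$ under right multiplication is everything; this relies on the fact that for $n\geq 3$ the $\RR$-module $\CC^{n}$ under $\mathfrak{su}(n)$ is irreducible with commutant $\CC$, so submodules of $\mathrm{Hom}(\CC^{n_{L-1}},V_L)$ are exactly the $\mathrm{Hom}(\CC^{n_{L-1}},V')$ for $\CC$-subspaces $V'\subseteq V_L$ — and (b) handle the trace bookkeeping in step~(iii), since brackets of off-diagonal skew-Hermitian matrices produce block-diagonal outputs whose two diagonal blocks are not individually traceless and can only be separated after adjoining the traceless diagonal element $\mathrm{diag}(-I_{n_{L-1}}/n_{L-1},\,I_{2L+1}/(2L+1))$. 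These are manageable but nontrivial details that the paper's two-shell computation bypasses entirely.
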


Using classical identities for Legendre polynomials and trigonometric relations one can prove that
$$
\langle Y^{m}_{\l} , B_{j} Y^{m'}_{\l} \rangle = 0
$$
for every $j=1,2,3$, and $m,m' \in \{-\l-1, \ldots, \l+1\}$. 

Moreover
$$
\langle Y^{m}_{\l} , B_{j} Y^{m'}_{\l'} \rangle = 0
$$
with $|\l -\l'|\geq 2$ for every $m\in \{-\l-1, \ldots, \l+1\}$, $m'\in \{-\l'-1, \ldots, \l'+1\}$, $j=1,2,3$.
In order to prove that the \CSC\ is satisfied,  we  choose a reordering $(\phi_k)_{k\in \NN}$ of the spherical harmonics in such a way that 
$$\{\phi_k\mid k=1,\dots,4\l+4\}=\{Y^{-\l}_\l,\ldots,Y^\l_\l,Y^{-\l-1}_{\l+1},\ldots,Y^{\l+1}_{\l+1}\},$$
and we are left to prove that 
$$
\mathrm{Lie} \mathcal{V}_{4\l+4} = \mathfrak{su}(4\l+4).
$$
The characteristics spectral gap of the space $\mathcal{H}_\l$ is $(\l+1)(\l+2) -\l(\l+1)=2 (\l+1)$. In particular
$
(2 (\l+1),1) , (2 (\l+1),2)$, and $(2 (\l+1),3)$ are in  $\Xi_{4\l+4}$.

\subsection{Matrix representations}

Denote
by $\mathcal{J}_\l$ 
the set of integer pairs 
$\{(r,m)\mid r=\l,\l+1,\ m=-j,\dots,j\}$. 
Consider the lexicographic ordering $\varrho:\{1,\dots,4\l+4\}\to \mathcal{J}_\l$. 
For $j,k=1,\dots, 4\l+4$, let $e_{j,k}$ be the $(4\l+4)$-square matrix whose entries are all zero, but the one at line $j$ and column $k$ 
which is equal to $1$.
Define
$$
E_{j,k} = e_{j,k} - e_{k,j}, \  F_{j,k} = i e_{j,k} + i e_{k,j}, \ D_{j,k} = ie_{j,j} - i e_{k,k} .
$$

By a slight abuse of language, also set 
$e_{\varrho(j),\varrho(k)}=e_{j,k}$. The analogous identification can be used to define $E_{\varrho(j),\varrho(k)},F_{\varrho(j),\varrho(k)},D_{\varrho(j),\varrho(k)}$.
Note that
\begin{equation}\label{Ji}
J_{i}(E_{(\l,m),(\l+1,n)}) =- F_{(\l,m),(\l+1,n)},
\quad
\mbox{ 
and
} \quad
J_{i}(F_{(\l,m),(\l+1,n)}) = E_{(\l,m),(\l+1,n)}.
\end{equation}
Thanks to this notation we can conveniently represent the matrices corresponding 
to the 
controlled vector field (projected on $\mathcal{H}_\l$).
A computation shows that the 
control potentials in the $x$ and $y$ directions, $-i \cos \varphi\sin \theta$ and 
$-i \sin \varphi\sin \theta$ respectively, projected on $\mathcal{H}_\l$, have the matrix representations
\begin{align*}
B_{1}^{(4\l+4)} &= \sum_{m=-\l}^{\l}( - q_{\l,m} F_{(\l,m),(\l+1,m-1)} + q_{\l,-m} F_{(\l,m),(\l+1,m+1)})\\
B_{2}^{(4\l+4)} &= \sum_{m=-\l}^{\l}(  q_{\l,m} E_{(\l,m),(\l+1,m-1)} +q_{\l,-m} E_{(\l,m),(\l+1,m+1)}),
\end{align*}
where
$$
q_{\l,m}= \sqrt{\frac{(\l-m+2)(\l-m+1)}{4(2\l+1)(2\l+3)}}.
$$

Similarly, we associate with the 
control potential in the $z$ direction, $-i \cos \theta$ the matrix representation
$$
B_{3}^{(4\l+4)}= \sum_{m=-\l}^{\l} p_{\l,m} F_{(\l,m),(\l+1,m)},
$$
with
$$p_{\l,m}=- \sqrt{\frac{(\l+1)^{2} - m^{2}}{(2\l+1)(2\l+3)}}.$$

\subsection{Useful bracket relations }\label{useful}

From the identity
$$
[e_{j,k},e_{n,m}] = \delta_{kn}e_{j,m} - \delta_{jm}e_{n,k}
$$
we get the relations
 \begin{equation}\label{eq:::}
[E_{j,k}, E_{k,n}] = E_{j,n},\qquad [F_{j,k}, F_{k,n}] = -E_{j,n},\qquad [E_{j,k}, F_{k,n}] = F_{j,n},   
   \end{equation}
and
 \begin{equation}\label{eq:1231}
   [E_{j,k}, F_{j,k}] = 2D_{j,k}, \quad \mbox{ and  } \quad  [F_{j,k}, D_{j,k}] = 2E_{j,k}.
   \end{equation}
The relations above can be interpreted following 
a ``triangle rule'': 
the bracket between an operator coupling the states $j$ and $k$ and an operator coupling the states $k$ and $n$ couples the states $j$ and $n$. 
On the other hand, the bracket is zero if two operators couple no common states, that is,  
\begin{equation}\label{E000}
[Y_{j,k}, Z_{j',k'}]=0 \quad \mbox{ if } \{j,k\} \cap \{j',k'\} = \emptyset,
\end{equation}
with $Y,Z \in \{E,F,D\}$.

\subsection{Controllability in $\mathfrak{su}(4\l+4)$}

\begin{lemma}\label{prop:liemolecule}
The Lie algebra $L$ generated by $B_{1}^{(4\l+4)}, B_{2}^{(4\l+4)}, B_{3}^{(4\l+4)}$, $J_{i}(B_{1}^{(4\l+4)})$, $ J_{i}(B_{2}^{(4\l+4)}),$ $J_{i}(B_{3}^{(4\l+4)})$  is equal to $\mathfrak{su}(4\l+4)$.
\end{lemma}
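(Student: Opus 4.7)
The strategy is to show $L = \mathfrak{su}(4\ell+4)$ by progressively enlarging $L$ (the reverse inclusion is immediate since the generators are traceless skew-Hermitian). In a first step, using~\eqref{Ji}, one extracts six ``sector operators'' from the six generators: $F^0 := B_3^{(4\ell+4)}$, $E^0 := J_i(B_3^{(4\ell+4)})$, $F^+ := \tfrac12(B_1^{(4\ell+4)}-J_i(B_2^{(4\ell+4)}))$, $F^- := -\tfrac12(B_1^{(4\ell+4)}+J_i(B_2^{(4\ell+4)}))$, and $E^\pm := \tfrac12(B_2^{(4\ell+4)}\pm J_i(B_1^{(4\ell+4)}))$. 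Each $F^\sigma$ (resp. $E^\sigma$), for $\sigma\in\{-1,0,+1\}$, is a weighted sum over $m$ of the between-shell couplings $F_{(\ell,m),(\ell+1,m+\sigma)}$ (resp. $E_{(\ell,m),(\ell+1,m+\sigma)}$), with weights $p_{\ell,m}$ for $\sigma=0$ and $q_{\ell,\mp m}$ for $\sigma=\pm1$.

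Next, the vanishing rule~\eqref{E000} restricts $[F^\sigma, E^\sigma]$ to its diagonal contributions ($m=m'$), and~\eqref{eq:1231} yields the traceless diagonal elements
$$
iR^\sigma := [F^\sigma, E^\sigma] = -2\sum_m (w^\sigma_m)^2 D_{(\ell,m),(\ell+1,m+\sigma)} \in L,
$$
where $w^\sigma_m$ denotes the relevant weight. A general diagonal $iR \in \mathfrak{su}$ satisfies $[iR, F_{j,k}] = -(R_j-R_k)E_{j,k}$ and $[iR, E_{j,k}] = (R_j-R_k)F_{j,k}$, so the iterated brackets $(\ad\, iR^\sigma)^k$ applied to $F^{\sigma'}$ or $E^{\sigma'}$ yield linear combinations of the sector couplings with coefficients forming a Vandermonde system in the eigenvalue differences $((R^\sigma)_{(\ell,m)}-(R^\sigma)_{(\ell+1,m+\sigma')})_m$. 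A direct computation (using that $q_{\ell,-m}^2$ is strictly monotone in $m$) shows that $iR^+$ produces all-distinct differences on every sector, so the Vandermonde system is invertible and every individual $E_{(\ell,m),(\ell+1,n)}$ and $F_{(\ell,m),(\ell+1,n)}$ with $|n-m|\leq 1$ lies in $L$.

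With all adjacent between-shell couplings in hand, the triangle relations~\eqref{eq:::}--\eqref{eq:1231} finish the job: brackets of the form $[F_{(\ell,m),(\ell+1,m)}, F_{(\ell+1,m),(\ell,m+1)}] = -E_{(\ell,m),(\ell,m+1)}$ create within-shell shift-by-one operators (and likewise in the $(\ell+1)$-shell); iterating these generates all within-shell $E_{j,k}$ and $F_{j,k}$; bracketing within-shell couplings against the adjacent between-shell ones extends to all between-shell couplings with $|n-m|>1$; and $[E_{j,k},F_{j,k}]=2D_{j,k}$ supplies the Cartan elements. The resulting collection $\{E_{j,k}, F_{j,k}, D_{j,k}\}_{j\neq k}$ spans $\mathfrak{su}(4\ell+4)$, yielding the claim.

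The main obstacle is the Vandermonde separation in the second paragraph: one must verify that at least one of the diagonal operators $iR^\sigma$ has all-distinct eigenvalue differences on each of the three sectors, for which the key input is the strict monotonicity of $q_{\ell,\pm m}^2$ in $m$ (note that the symmetric sequence $p_{\ell,m}^2 = p_{\ell,-m}^2$ alone would not suffice). After this, the triangle rule makes the remaining combinatorial work routine.
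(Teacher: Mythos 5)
Your argument is correct, and it takes a genuinely different route through the middle of the proof. The paper starts from $\mathrm{ad}^{2j}_{B_3}J_i(B_3)$, whose Vandermonde coefficients are $p_{\ell,m}^{2j+1}$; since $p_{\ell,m}^2$ is even in $m$, this only isolates the symmetric sums $E_{(\ell,-m),(\ell+1,-m)}+E_{(\ell,m),(\ell+1,m)}$, and the paper then has to run a somewhat elaborate peeling argument (the $Q_m$'s, double brackets against $E_{(\ell,0),(\ell+1,0)}$, and a recursion) to break that $m\leftrightarrow -m$ symmetry. You instead go directly for a Cartan element by bracketing the two off-diagonal sector operators $F^+$ and $E^+$, obtaining $iR^+=-2\sum_m q_{\ell,-m}^2\,D_{(\ell,m),(\ell+1,m+1)}$; since $q_{\ell,-m}^2$ is strictly monotone in $m$ (unlike $p_{\ell,m}^2$), the iterated brackets $\mathrm{ad}_{iR^+}^k$ separate every adjacent coupling by a single Vandermonde argument. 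This is conceptually cleaner and avoids the peeling recursion entirely. Two small points worth tightening if you write this up: (a) the Vandermonde that appears is actually in the \emph{squares} $\bigl((R^+)_{(\ell,m)}-(R^+)_{(\ell+1,m+\sigma')}\bigr)^2$ because $\mathrm{ad}_{iR}^2 F_{j,k}=-(R_j-R_k)^2 F_{j,k}$, so you need distinctness of squares — which does hold since the differences are all strictly negative; and (b) for the sectors $\sigma'=0$ and $\sigma'=-1$ the first one or two values of $m$ produce ``unmatched'' boundary differences (because $R^+$ vanishes on $(\ell+1,-\ell-1)$ and $(\ell+1,-\ell)$), so distinctness there is not a bare consequence of monotonicity but requires one extra line checking that the boundary values fall below the range of the matched sums. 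Once both are noted, the rest (the triangle-rule endgame generating within-shell and then all remaining $E$, $F$, $D$) is routine and essentially coincides with what the paper does at the corresponding stage.
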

\begin{proof}
The first step of the proof consists in showing that the Lie algebra $L$ contains the elementary matrices
\begin{equation}\label{Evicini}
E_{(\l,k),(\l+1,k+j)}\quad \mbox{ for }k=-\l,\dots,\l,\ j=-1,0,1.
\end{equation}

With a slight abuse of notation and for the sake of readability, let us write 
$B_{j} = B_{j}^{(4\l+4)}$, $j=1,2,3$. 
Let us also write $\ad_\alpha \beta$ for $[\alpha,\beta]$ and $\ad_\alpha^{j+1} \beta$ for $[\alpha,\ad_\alpha^{j} \beta]$.

Notice that
\begin{align*}
J_{i}(B_{3})  = \sum_{\l=-m}^{m} p_{\l,m} J_{i}(F_{(\l,m),(\l+1,m)}) =  \sum_{\l=-m}^{m} p_{\l,m} E_{(\l,m),(\l+1,m)}.
\end{align*}
By induction on $j\geq 0$ and using the bracket relations~\eqref{eq:1231}, 
we have
\begin{align*}
\ad^{2j}_{B_{3}} J_{i}(B_{3}) &= [B_{3},[B_{3},\ad^{2j-2}_{B_{3}} J_{i}(B_{3}) ] ] \\
 &= (-1)^{j}  2^{2j} \sum_{m=-\l}^{\l} p_{\l,m}^{2j+1} E_{(\l,m),(\l+1,m)}.
\end{align*}

By invertibility of the Vandermonde matrix and since
$p_{\l,m} \neq p_{\l,n}$ for every $n \neq m,-m$,
it follows
\begin{equation}\label{eq:verticali}
E_{(\l,-m),(\l+1,-m)}+E_{(\l,m),(\l+1,m)}\in L\qquad\mbox{ for } m=0,\dots,\l. 
\end{equation}

In particular $E_{(\l,0),(\l+1,0)}\in L$. The double bracket of  
 \begin{equation}\label{unaaa}
 \frac{B_{2} - J_{i}(B_{1})}{2} = \sum_{m=-\l}^{\l} q_{\l, m} E_{(\l,m),(\l+1,m-1)}\in L,
 \end{equation}
 with $E_{(\l,0),(\l+1,0)}$
 is easily computed using \eqref{eq:::} and \eqref{E000}
and gives 
\begin{align*}
\Big[\Big[ \sum_{m=-\l}^{\l} q_{\l, m} E_{(\l,m),(\l+1,m-1)}, E_{(\l,0),(\l+1,0)}\Big],E_{(\l,0),(\l+1,0)}\Big]&= 
 -q_{\l,1}[E_{(\l,0),(\l,1)},E_{(\l,0),(\l+1,0)}] \\
 &\quad  -q_{\l,0} [E_{(\l+1,-1),(\l+1,0)},E_{(\l,0),(\l+1,0)}]\\
&=  q_{\l,0}E_{(\l,0),(\l+1,-1)}+q_{\l,1} E_{(\l,1),(\l+1,0)} \in L.
\end{align*}

Define $Q_0= q_{\l,0}E_{(\l,0),(\l+1,-1)}+q_{\l,1} E_{(\l,1),(\l+1,0)}$ and, similarly,
$Q_m= q_{\l,-m}E_{(\l,-m),(\l+1,-m-1)} + q_{\l,m+1} E_{(\l,m+1),(\l+1,m)}$ for $0<m<\l$, $Q_\l= q_{\l,-\l}E_{(\l,-\l),(\l+1,-\l-1)}$.
In particular $ B_{2} - J_{i}(B_{1}) = 2\sum_{m=0}^{\l} Q_m$.

Using again \eqref{eq:::} and \eqref{E000}, we have
\begin{align*}
\Big[\Big[ \sum_{m=k}^\l Q_m, E_{(\l,-k),(\l+1,-k)}+ E_{(\l,k),(\l+1,k)} \Big], E_{(\l,-k),(\l+1,-k)}+E_{(\l,k),(\l+1,k)} \Big] 
= Q_{k},
\end{align*}
for $k=1,\dots,\l$. 
By recurrence on $k$ and because of \eqref{eq:verticali}, it follows that $Q_k\in L$ for $k=0,\dots,\l$.

Now, since  
$Q_\l/q_{\l,-\l}=E_{(\l,-\l),(\l+1,-\l-1)}$ is in $L$, then
\begin{align*}
\mathrm{ad}_{E_{(\l,-\l),(\l+1,-\l-1)}}^2 (E_{(\l,-\l),(\l+1,-\l)}+E_{(\l,\l),(\l+1,\l)})&=
  -E_{(\l,-\l),(\l+1,-\l)}\in L,
\end{align*}
which, in turns,  implies that 
\begin{align*}
\mathrm{ad}_{E_{(\l,-\l),(\l+1,-\l)}}^2 (Q_{\l-1})&=
 -q_{\l,-\l}E_{(\l,-\l+1),(\l+1,-\l)} \in L.
\end{align*}
Iterating the argument,  
 $E_{(\l,m),(\l+1,m)}$ and $E_{(\l,m),(\l+1,m-1)}$ are in $L$ for every $m= -\l,\ldots,\l$.

Developing the same argument as above replacing \eqref{unaaa} by 
$$
 \frac{B_{2} + J_{i}(B_{1})}{2} = \sum_{m=-\l}^{\l} q_{\l,-m} E_{(\l,m),(\l+1,m+1)}\in L,
 $$
we have that also  
$E_{(\l,m),(\l+1,m+1)}$ is in $L$ for every $m= -\l,\ldots,\l$,
proving \eqref{Evicini}. 
It then follows from \eqref{eq:::} that each $E_{j,k}$ is in $L$. 

If now we replace  \eqref{unaaa} by 
$$
\frac{B_{1}+J_{i}(B_{2})}{2} = -\sum_{m=-\l}^{\l} q_{\l,m} F_{(\l,m),(\l+1,m-1)}\in L,
$$
or
$$
\frac{B_{1}-J_{i}(B_{2})}{2} = \sum_{m=-\l}^{\l} q_{\l,-m} F_{(\l,m),(\l+1,m+1)}\in L,
$$
we obtain from the arguments above that $F_{(\l,m),(\l+1,m-1)}$ and $F_{(\l,m),(\l+1,m+1)}$ are in $L$ for every $m= -\l,\ldots,\l$.
The relations \eqref{eq:::} and \eqref{eq:1231} allow then to conclude that $L=\mathfrak{su}(4\l+4)$.
\end{proof}

\section{Proof of Theorem~\ref{metalemma}}\label{sec:prova1}

\subsection{Time-reparametrization}\label{s-time}

Up to replacing each $B_j$ by $\delta B_j$, we can assume that $\delta=1$.

For every piecewise constant function $z
$
such that
$z(t)\geq 1$ for every $t$,
we consider the time-reparametrization 
\begin{equation} \label{eq:main-repar0}
\frac{d\psi}{dt}(t)=(z(t)A +  u_1(t)z(t)B_1+ \cdots + u_p(t)z(t) B_p) \psi(t)
\end{equation}
of system~\eqref{eq:main}. Each $u_j(t)z(t)$ belongs to the time-varying set $z(t)U_j$. 

If $u_{1}, \ldots, u_{p}$ are control laws in~\eqref{eq:main-repar0} then the corresponding controls in~\eqref{eq:main} are their time-reparametrizations 
$\tilde u_{j}(s) = u_{j}(t(s))$  with $t(s)=\int_{0}^{s}z(\tau)d\tau$, $j=1,\ldots,p$.
By restricting the range of available controls and setting $v_{j}(t)=u_{j}(t)z(t)$,  we can focus our attention to trajectories of 
\begin{equation} \label{eq:main-repar}
\frac{d\psi}{dt}(t)=(z(t)A+v_1(t)B_1+ \cdots + v_p(t) B_p) \psi(t),\quad z(t)\geq 1,\quad v(t)=(v_1(t),\dots,v_p(t))\in U.
\end{equation}
Each solution of \eqref{eq:main-repar} with $z$ and $v$ piecewise constant is the time-reparametrization of a 
solution of~\eqref{eq:main} with piecewise constant controls (but the converse is not necessarily true, since we restricted the set of admissible controls). Hence, the approximate simultaneous controllability 
of \eqref{eq:main-repar}  implies the  
approximate  simultaneous controllability
of \eqref{eq:main}.
Moreover 
$$
\|\tilde u_{j}\|_{L^{1}}  = \int_{0}^{t^{-1}(T)}| \tilde u_{j}(\tau)|d\tau =  
\int_{0}^{T}|u_{j}(t)|z(t) dt
= \int_{0}^{T}{|v_{j}(t)|}dt \leq  T,
$$
for  $j=1,\ldots,p$. The last inequality holds since either $U_{j} = [0,1]$ or $U_{j}=[-1,1]$.
Hence 
the approximate simultaneous controllability in $\mathcal{L}$ of \eqref{eq:main-repar}  with a bound on the controllability time uniform with respect to the tolerance  
 implies, in fact, the \Lg\ approximate  simultaneous controllability
of \eqref{eq:main}.

\subsection{Interaction framework}
\label{sec:intfra}

Given a solution  $\psi(\cdot)$ of \eqref{eq:main-repar} with 
controls $z(\cdot), v_{1}(\cdot), \ldots, v_{p}(\cdot)$ and  
a piecewise constant function $\alpha(\cdot)$ with values in $\{0,1\}$, let us define
$$\omega(t) = \int_{0}^{t}(z(s)-\alpha(s))ds$$
and 
$$
y(t) = e^{-\omega(t)A } \psi(t).
$$
In particular
\begin{equation}\label{eq:1309}
|\langle \phi_k,y(t)\rangle | = |\langle \phi_k,\psi(t)\rangle|, \quad k\in \NN,
\end{equation}
for every $t$. 
For $
\omega, v_{1},\ldots,v_{p} \in \RR$ set $ \Theta(
\omega, v_{1},\ldots,v_{p})=
e^{-\omega A  }
(v_{1} {B_{1}} + \cdots + v_{p} {B_{p}} )
e^{\omega A }$.
Note that
\begin{align}
\Theta(\omega, v_{1},\ldots,v_{p})_{jk} &= \langle \phi_{k}, \Theta(\omega, v_{1},\ldots,v_{p}) \phi_{j} \rangle=
e^{i(\lambda_{k} - \lambda_{j})\omega} \left(v_{1}  (B_{1})_{jk}
+\cdots + v_{p} (B_{p})_{jk} \right),\label{m-Theta}
\end{align}
and that  $y(\cdot)$ satisfies
\begin{equation}\label{eq:main-interaction-pre}
\dot y(t)   = (\alpha(t) A+
\Theta(\omega(t), v_{1}(t),\ldots,v_{p}(t)))y(t), \qquad \alpha\in\{0,1\}, v\in U, \dot \omega + \alpha \geq 1.
\end{equation}
Conversely, each solution of \eqref{eq:main-interaction-pre} with $\alpha\in\{0,1\}$ and $v\in U$ piecewise constant 
and
$\omega$ continuous and piecewise affine, with $\dot \omega+\alpha =z\geq 1$ almost everywhere,
is, up to a 
time-dependent change of coordinates 
preserving the modulus of each component with respect to the 
basis $\Phi$,  a 
solution of~\eqref{eq:main-repar} with $u$ piecewise constant. 
In particular, each solution of  
\begin{equation}\label{eq:main-interaction}
\dot y(t)   = (\alpha(t) A+
\Theta(\omega(t), v_{1}(t),\ldots,v_{p}(t)))y(t),\qquad \alpha\in\{0,1\}, v\in U, \dot \omega \geq 1,
\end{equation}
with
$\alpha, v$ piecewise constant 
and
 $\omega$ continuous and piecewise affine
is, up to a 
time-dependent change of coordinates preserving the modulus of each component, a 
solution of~\eqref{eq:main-repar} with $u$ piecewise constant  (but the converse is not necessarily true). 

 \begin{proposition}\label{prop:implies}
Approximate simultaneous controllability of \eqref{eq:main-interaction} implies  approximate  simultaneous controllability of \eqref{eq:main}. If, moreover, approximate simultaneous controllability in $\mathcal{L} = \bigcup_{k \in \NN}  \mathrm{span}\{\phi_{1},\ldots,\phi_{k}\}$ of~\eqref{eq:main-interaction} is achieved with a uniform bound on time then~\eqref{eq:main} is \Lg\ approximate  simultaneous controllable.
 \end{proposition}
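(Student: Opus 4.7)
The plan is to compose the correspondences established in Sections~\ref{s-time} and~\ref{sec:intfra}. An admissible triple $(\alpha, v, \omega)$ for \eqref{eq:main-interaction} produces, via $\psi(t) = e^{\omega(t) A} y(t)$, an admissible solution of \eqref{eq:main-repar}, which in turn, after the time-reparametrization $t \mapsto \int_0^t (\dot\omega + \alpha)\,ds$, yields an admissible solution of \eqref{eq:main} with the same final state. The only obstacle is that $\psi = e^{\omega A} y$ preserves only moduli in the basis $\Phi$: if $y(T)$ is close to $\hat\Upsilon\psi_k$, then $\psi(T) = e^{\omega(T)A} y(T)$ is only close to $e^{\omega(T)A}\hat\Upsilon\psi_k$, so an extra diagonal unitary $e^{\omega(T) A}$ remains to be corrected. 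The device is to kill this factor \emph{for free} by padding the controls at the end with $(\alpha,v) = (0,0)$ and $\dot\omega = 1$: on such a padding interval $\Theta(\omega,0) = 0$, so $y$ stays frozen while $\omega(T)$ can be tuned, and the Kronecker/Weyl density theorem applied to the finite tuple $(\lambda_1,\ldots,\lambda_N)$ ensures that the padding time can be chosen so that $e^{\omega(T)A}$ is arbitrarily close to the identity on $\pi_N \H$.

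Concretely, fix $\hat\Upsilon$, $\psi_1,\ldots,\psi_r$, and $\varepsilon > 0$. Choose $N$ large enough that $\|\hat\Upsilon\psi_k - \pi_N\hat\Upsilon\psi_k\| < \varepsilon/3$ for every $k$ (when $\psi_k,\hat\Upsilon\psi_k \in \mathcal{L}$, $N$ can be fixed independently of $\varepsilon$). Apply the hypothesised approximate simultaneous controllability of \eqref{eq:main-interaction} with target $\hat\Upsilon$ and tolerance $\varepsilon/3$ to obtain admissible $(\alpha,v,\omega)$ on $[0,T_1]$ with $\|y(T_1) - \hat\Upsilon\psi_k\| < \varepsilon/3$; set $\omega_1 := \omega(T_1)$. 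Extend the controls on $[T_1,T_1+\tau]$ by $\alpha \equiv 0$, $v \equiv 0$, $\dot\omega \equiv 1$. By Kronecker density, the set of $s \geq 0$ with $\max_{j \leq N}|e^{is\lambda_j}-1|<\eta$ is syndetic for every $\eta > 0$, so $\tau \geq 0$ can be picked such that $s = \omega_1 + \tau$ belongs to this set with $\eta$ small enough to guarantee $\|e^{(\omega_1+\tau)A}\pi_N\xi - \pi_N\xi\| < \varepsilon/3$ for every unit $\xi$. Summing the three error contributions then gives $\|\psi(T_1+\tau) - \hat\Upsilon\psi_k\| < \varepsilon$. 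The extended triple remains admissible for \eqref{eq:main-interaction}, and by Sections~\ref{sec:intfra}--\ref{s-time} it lifts to a piecewise constant control of \eqref{eq:main} steering $\psi_k$ $\varepsilon$-close to $\hat\Upsilon\psi_k$.

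For the $L^1$-bounded statement with $\psi_k,\hat\Upsilon\psi_k \in \mathcal{L}$, the uniform-in-$\varepsilon$ time bound $T_1 \leq K$ of the assumed controllability in $\mathcal{L}$ of \eqref{eq:main-interaction} is enough, \emph{even though} the Kronecker padding time $\tau$ may blow up as $\varepsilon \to 0$. The key observation is that the padding interval $[T_1,T_1+\tau]$ carries $v \equiv 0$, so by the identity at the end of Section~\ref{s-time},
$$
\|\tilde u_j\|_{L^1} = \int_0^{T_1+\tau}|v_j(t)|\,dt = \int_0^{T_1}|v_j(t)|\,dt \leq T_1 \leq K,
$$
independently of $\varepsilon$. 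The main subtlety is precisely this interplay: the padding inflates the total time needed in \eqref{eq:main-repar} (and hence in \eqref{eq:main}), but it contributes nothing to the $L^1$ norm of the controls because $v$ vanishes on it, so the uniform $L^1$ bound is inherited directly from the uniform time bound of \eqref{eq:main-interaction}.
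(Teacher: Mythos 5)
Your argument is correct (tighten the constants: the two nested triangle inequalities in the final estimate produce four error terms of size $\varepsilon/3$, so one should start from $\varepsilon/4$), and it follows a genuinely different route from the paper's. The paper first observes that the hypothesis yields controllability \emph{in modulus} of \eqref{eq:main-repar} and of its time-reversal; it then invokes \cite[Lemma~6.3]{ancoranoi} (which uses the infinitude of the spectrum) to pick pivot eigenstates $\phi_{k_1},\dots,\phi_{k_r}$ whose free-flow orbit is dense in a torus, and concatenates three legs: steer the $\psi^j_0$ in modulus to the pivots, travel along the free orbit to align phases, then steer backward from the pivots to the $\psi^j_1$ via the reversed system. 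You instead exploit the explicit conjugation $\psi = e^{\omega A}y$ in a single pass: once $y(T_1)$ is $\varepsilon$-close to $\hat\Upsilon\psi_k$, the sole defect in $\psi$ is the residual diagonal unitary $e^{\omega(T_1)A}$, which you eliminate by padding with the admissible triple $(\alpha,v,\dot\omega)=(0,0,1)$ (so $y$ is frozen while $\omega$ runs) and choosing the padding length via Kronecker almost-periodicity so that $e^{sA}$ is nearly the identity on $\pi_N\H$. This replaces time-reversal and the pivoting lemma by an elementary almost-periodicity fact, and does not invoke $(\mathbb{A}1)$ at this step. On the \Lg\ part your bookkeeping is the right one: both constructions involve a phase-correcting interval that is unbounded as $\varepsilon\to 0$, and the uniformly bounded quantity is $\int_0^T |v_j|$, not $T$, precisely because $v\equiv 0$ on that interval. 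You state this explicitly; the paper's proof of Proposition~\ref{prop:implies} leaves this half of the statement implicit, pointing to the remark at the end of Section~\ref{s-time}, which is phrased as a uniform bound on the controllability time of \eqref{eq:main-repar} — a bound the three-leg concatenation does not actually satisfy, though the $L^1$ conclusion holds for the reason you give.
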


\begin{proof}
The strategy of the proof follows the idea of the proof of~\cite[Proposition~6.1]{ancoranoi}.

It follows from~\eqref{eq:1309} that  approximate simultaneous controllability of \eqref{eq:main-interaction}  implies  approximate  simultaneous controllability of \eqref{eq:main-repar} in modulus.

Moreover, because of the unitarity of the evolution, the approximate simultaneous controllability of \eqref{eq:main-interaction} is equivalent 
to the approximate simultaneous controllability of the system
$$
\dot y(t)   = - (\alpha(t) A+ \Theta(\omega(t), v_{1}(t),\ldots,v_{p}(t)))y(t),\qquad \alpha\in\{0,1\},\quad v\in U,
$$
which implies approximate simultaneous controllability  in modulus of the time-reversed of~\eqref{eq:main-repar}.

Take $r$ orthonormal initial conditions $\psi_0^1,\ldots,\psi_0^r$ and $r$ orthonormal final conditions  $\psi_1^1,\ldots,\psi_1^r$.  
Since the the spectrum of $A$ is infinite by Assumption~($\mathbb{A}1$) we can 
apply~\cite[Lemma~6.3]{ancoranoi} so that for every
 tolerance $\eta>0$ there exist $ k_1,\dots, k_r$ such that
$$
{\cal C}=\{e^{t A}\phi_{k_1}+ \cdots +e^{t A}\phi_{k_r}\mid t\in\RR\}.
$$
is $\eta$-dense in the torus
$$
{\cal T}=\{e^{\theta_1 A}\phi_{k_1}+ \cdots +e^{\theta_r A}\phi_{k_r}\mid \theta_1,\ldots, \theta_{r} \in \RR\}.
$$

By approximate simultaneous controllability  in modulus of \eqref{eq:main-repar}  it follows
that there exists an admissible control  $(z,v)$
steering simultaneously each $\psi_0^j$, for  $j=1,\ldots,r$,    $\eta$-close to 
 $e^{\theta_jA}\phi_{k_j}$ for some $\theta_1,\dots,\theta_r\in\RR$.
 
Similarly, by approximate simultaneous controllability  in modulus of the time-reversed of~\eqref{eq:main-repar} there exists an admissible control $(\tilde z, \tilde v)$   
steering  system~\eqref{eq:main-repar} simultaneously, for some $\tilde \theta_1,\dots,\tilde \theta_r\in\RR$,  
from $e^{\tilde \theta_1A}\phi_{ k_1}, \ldots,e^{\tilde \theta_rA}\phi_{ k_r}$
to an $\eta$-neighborhood of $\psi_1^j, \ldots, \psi_{1}^{r}$. 

Finally the concatenation of the control $(z,v)$, a control constantly equal to $(1,0)$ on a time interval of suitable length, and $(\tilde z, \tilde v)$ steers 
system~\eqref{eq:main-repar} simultaneously 
from   $\psi_0^j, \ldots, \psi_{0}^{r}$
to a $3 \eta$-neighborhood of $\psi_1^j, \ldots, \psi_{1}^{r}$. 
  
According to the conclusion of Section~\ref{s-time}, the approximate simultaneous controllability of~\eqref{eq:main-repar} implies approximate simultaneous controllability of~\eqref{eq:main}.
\end{proof}

\subsection{Galerkin approximation}

\begin{definition}
Let $N \in \NN$.  The \emph{Galerkin approximation}  of \eqref{eq:main-interaction}
of order $N$ is the system 
\begin{equation}\label{eq:galerkin-N}
\dot x =(\alpha A^{(N)}+ \Theta^{(N)}(\omega, v_{1},\ldots,v_{p})) x, \quad x \in \H,
\end{equation}
where $\Theta^{(N)}(\omega, v_{1},\ldots,v_{p})=\pi_N \Theta(\omega, v_{1},\ldots,v_{p}) \pi_N$. 
The controls $v$ are piecewise constant with values in $U$, while 
$\omega$ is continuous and piecewise affine, with $\dot \omega \geq 1$ almost everywhere.
\end{definition}

In the following section we recall a convexification result whose role is to identify the matrices that can be obtained by convexification of matrices of the form $\Theta^{(N)}(\omega, v_{1},\ldots,v_{p})$. Recall that the elements of $\Theta^{(N)}(\omega, v_{1},\ldots,v_{p})$ are described by \eqref{m-Theta}.

\subsection{Convexification}

The following technical result 
has been proved in \cite{ancoranoi}. 

\begin{lemma}\label{lem:convexification}
Let $\kappa$ be a positive integer and $\gamma_{1}, \ldots, \gamma_{\kappa} \in \RR\setminus \{0\}$ be  such that
$|\gamma_{1}|\neq|\gamma_{j}|$ for $j =2,\ldots, \kappa.$
Let 
$$ 
\varphi(t) = (e^{it\gamma_{1}}, \ldots, e^{it\gamma_{\kappa}}).
$$
Then, for every $\tau_0\in \RR$, we have
$$ 
\overline{\mathrm{conv}{\varphi([\tau_0,\infty))}} \supseteq \nnu S^{1} \times \{(0, \ldots, 0)\}\,,
$$
where
\begin{equation}\label{nu}
\nnu=\prod_{k=2}^{\infty} \cos \left (\frac{\pi}{2 k} \right ) >0.
\end{equation}
Moreover,
for every $R>0$ and $\xi \in {S}^{1}$ there exists a sequence $(\tau_{k})_{k=1}^\infty$ such that $\tau_1\geq \tau_0$, 
$
\tau_{k+1} - \tau_{k} > R
$,
and 
$$
\lim_{K\to \infty } \frac{1}{K} \sum_{k=1}^{K} \varphi (\tau_{k}) = (\nnu \xi,0,\ldots,0)\,.
$$
\end{lemma}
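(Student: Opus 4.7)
The inclusion is a statement about which points lie in the closed convex hull of the orbit $\varphi([\tau_0,\infty))$, so for each $\xi\in S^{1}$ I want to exhibit a probability measure $\mu$ supported in $[\tau_0,\infty)$ whose $\varphi$-barycenter $\int\varphi(t)\,d\mu(t)$ equals, or at least approximates, the target $(\nnu\xi,0,\ldots,0)$. My plan is to build $\mu$ multiplicatively by iterated pair averaging: starting from $\delta_{t_0}$ for some $t_0\geq\tau_0$, for each $k=2,\ldots,\kappa$ convolve with a two-atom probability measure $\mu_k=\tfrac12(\delta_0+\delta_{s_k})$ whose shift $s_k$ is to be chosen. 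The resulting $\mu=\delta_{t_0}*\mu_{2}*\cdots*\mu_{\kappa}$ is a convex combination of $2^{\kappa-1}$ Dirac masses at times of the form $t_0+\sum_k \epsilon_k s_k$, and its $\ell$-th component factorizes as
\begin{equation*}
e^{it_0\gamma_\ell}\prod_{k=2}^{\kappa}\frac{1+e^{is_k\gamma_\ell}}{2}=e^{it_0\gamma_\ell}\prod_{k=2}^{\kappa}e^{is_k\gamma_\ell/2}\cos(s_k\gamma_\ell/2).
\end{equation*}

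\textbf{Cancellation and lower bound.} Picking $s_k$ of the form $(2n_k+1)\pi/\gamma_k$ forces $\cos(s_k\gamma_k/2)=0$ and thus annihilates the $k$-th coordinate. The hypothesis $|\gamma_1|\neq|\gamma_k|$ prevents $\gamma_1/\gamma_k$ from equalling $\pm1$, so the corresponding first-coordinate factor $\cos((2n_k+1)\pi\gamma_1/(2\gamma_k))$ is nonzero for every admissible $n_k$. By Dirichlet approximation when $\gamma_1/\gamma_k$ is irrational, and by a finite inspection of the orbit $\{(2n+1)(\gamma_1/\gamma_k)\bmod 2\}$ when it is rational, I would select $n_k$ so that this cosine has modulus at least $\cos(\pi/(2k))$; multiplying these bounds produces an overall modulus on the first coordinate bounded below by $\prod_{k=2}^{\kappa}\cos(\pi/(2k))\geq\nnu$. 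Sliding $t_0\geq\tau_0$ then rotates the first-coordinate phase onto any prescribed $\xi\in S^{1}$.

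\textbf{Discrete sequence with gap $>R$.} For the second claim, I would realize the above measure as a genuine Birkhoff average of a sequence $(\tau_k)$ with $\tau_{k+1}-\tau_k>R$. Given a discrete convex combination achieving the target within tolerance $1/K$, I would rationalize its weights and then spread out each atomic time by many well-separated copies along an arithmetic progression of step $P>R$; Weyl equidistribution on the subtorus closure of $\varphi(\RR)$ in $(S^{1})^\kappa$ guarantees that the resulting discrete Cesàro averages converge to the same barycenter as the original measure. A diagonal extraction in $K$ then yields the sequence.

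\textbf{Main obstacle.} The delicate step is the quantitative lower bound $\cos(\pi/(2k))$ on each individual cosine factor, particularly when $\gamma_1/\gamma_k$ is rational with small denominator: the attainable cosines form a finite set and one must verify that the best of them beats $\cos(\pi/(2k))$---which may require refining the simple two-point scheme into a $q$-point averaging at step $k$ when the denominator $q$ of $\gamma_1/\gamma_k$ is small. The convergence of the infinite product $\nnu$ itself, via $\sum(\pi/(2k))^{2}<\infty$, is a prerequisite but standard.
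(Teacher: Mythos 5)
The paper does not prove this lemma itself---it simply cites \cite{ancoranoi} for it---so there is no in-paper proof to compare with; I can only assess your attempt on its merits, and it contains two genuine gaps.

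\textbf{The pair-averaging step breaks when $\gamma_1/\gamma_k$ is an odd integer.} Your factorization of the barycenter of the convolution $\delta_{t_0}*\mu_2*\cdots*\mu_\kappa$ is correct, and killing coordinate $k$ by taking $s_k=(2n_k+1)\pi/\gamma_k$ is the right idea. But the claim that the residual first-coordinate factor $\cos\bigl((2n_k+1)\pi\gamma_1/(2\gamma_k)\bigr)$ can always be made $\geq\cos(\pi/(2k))$ in modulus is false. Take $\gamma_1=3$, $\gamma_k=1$ (allowed, since $|\gamma_1|\neq|\gamma_k|$): then $\cos\bigl((2n_k+1)\cdot 3\pi/2\bigr)=0$ for \emph{every} integer $n_k$, because $3(2n_k+1)$ is odd. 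The same happens whenever $\gamma_1/\gamma_k$ is an odd integer with absolute value $\geq 3$. You flag this as a possible ``delicate step'' and suggest $q$-point averaging, but this is not a refinement one may or may not need---it is indispensable (for instance, with $\gamma_1/\gamma_k=3$, averaging over $t_0,t_0+2\pi/3\gamma_k,t_0+4\pi/3\gamma_k$ kills coordinate $k$ and leaves the first-coordinate factor equal to $1$). Proving that a suitable length-$m_k$ arithmetic progression always gives the uniform bound $\cos(\pi/(2k))$, for \emph{every} real ratio $\neq\pm1$, is precisely the quantitative heart of the lemma, and you have left it entirely open.

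\textbf{The discrete-sequence construction as written does not preserve the barycenter.} For the second statement you propose to spread each atomic time $t_j$ along an arithmetic progression of step $P>R$ and invoke Weyl equidistribution. But replacing $\delta_{t_j}$ by $\frac{1}{N}\sum_{n=0}^{N-1}\delta_{t_j+nP}$ multiplies the $\ell$-th barycenter component by $\frac{1}{N}\sum_{n=0}^{N-1}e^{inP\gamma_\ell}$, which tends to $0$ unless $P\gamma_\ell\in 2\pi\ZZ$; for incommensurable frequencies no $P$ works for all $\ell$ simultaneously, so this operation destroys the first coordinate rather than preserving it. The correct route is to exploit the freedom in the integers $n_k$ (equivalently, in the shifts $s_k$) at the stage where the measure is built: taking $n_k$ large enough makes the $2^{\kappa-1}$ atom times themselves pairwise separated by more than $R$, \emph{without} changing the barycenter, and one then lists these atoms (with repetitions controlled over successive, increasingly accurate constructions) to obtain the sequence $(\tau_k)$. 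As it stands your argument for this half is not merely incomplete but based on a step that fails.
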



\subsection{Choice of the order of the Galerkin approximation
}\label{sec:n}

In order to prove approximate simultaneous controllability,  
we should take $r$ in $ \NN$, $\psi_1,\ldots,\psi_{r}$ in $\H$, $\hat\Upsilon$ in $\mathbf{U}(\H)$, and $\vep>0$ and prove 
 the existence of a piecewise constant control $u:[0,T]\rightarrow U$
 such that
$$
\left \| \hat\Upsilon \psi_k - \pro^{u}_T \psi_k \right \|<\vep,\qquad k=1,\dots,r.
$$

Notice that for  $n_{0}$ large enough there 
exists  $g
\in SU(n_{0})$ such that 
\begin{equation}\label{eq:1402}
| \langle \phi_{j},\hat\Upsilon \psi_k \rangle  -  \langle \pi_{n_{0}}\phi_{j},g \pi_{n_{0}}\psi_k \rangle| <\vep
\end{equation}
for every  $1 \leq k\leq r$ and $j\in\NN$.
This simple fact 
suggests to prove 
approximate simultaneous controllability
by studying the controllability of  the lift of
\eqref{eq:main-interaction} 
in the Lie group $SU(n_{0})$.

\subsection{Control in $SU(n)$}\label{Wn}

Let $n\geq n_0$ 
be chosen, in accord with the statement of Theorem~\ref{metalemma}, such that 
hypothesis \eqref{hypothesis} holds true.
Define the 
set of matrices 
\begin{align*}
 \mathcal{W}_{n}
 = &
\left\{A^{(n)} 
\right\}
\cup \left\{{\cal E}_0(B^{(n)}_j)\mid
(0,j)\in \Xi_n \right\}\\
&\cup \left\{{\cal E}_0(B^{(n)}_j)+\nnu{\cal E}_\sigma(B^{(n)}_j)\mid
(\sigma,j)\in \Xi_n 
\mbox{ and $\sigma,j$ are such that }
(0,j)\in\Xi_n, \sigma\ne 0
\right\}\\
&
\cup \left\{\nnu{\cal E}_\sigma(B^{(n)}_j)\mid
(\sigma,j)\in \Xi_n,\ \sigma\ne 0, 
\mbox{ and }
U_j=[-1,1]
\right\},
\end{align*}
where 
$\Xi_n$ and $\nnu$ are defined as in
\eqref{sigmabar} and \eqref{nu}, respectively. 
(Recall that by rescaling we are assuming $\delta=1$.)

Notice that $\mathrm{Lie}(\mathcal{W}_{n})=\mathrm{Lie}(\mathcal{V}_{n}^0)$.

Consider the 
auxiliary control system 
\begin{equation}\label{eq:Mausiliario}
\dot x= M(t) x,\quad M(t)\in \mathcal{W}_n,
\end{equation}
where $M$ plays the role of control. 
 It follows from \eqref{hypothesis} and standard controllability results on compact Lie groups (see \cite{jur}) that  
for every $g \in SU(n)$ 
there exists a piecewise constant function $M:[0,T] \to \mathcal{W}_n$ such that 
$$
 \rexp{T}{M(s)}{s} = g,
$$
where the chronological notation $\rexp{t}{V_s}{s}$ is used for the flow from time $0$ to $t$ of the time-varying equation $\dot q=V_s(q)$, $q \in \CC^{n}$ (see \cite{book2}).

\subsection{System reduction by convexification}\label{WnN}

Let $n$ be fixed as in the previous section. For every $N \geq n$ let
\begin{align*}
 \mathcal{W}_{n,N}
 = &
\left\{A^{(N)} 
\right\}
\cup \left\{{\cal E}_0(B^{(N)}_j)\mid
(0,j)\in \Xi_n \right\}\\
&\cup \left\{{\cal E}_0(B^{(N)}_j)+\nnu{\cal E}_\sigma(B^{(N)}_j)\mid
(\sigma,j)\in \Xi_n 
\mbox{ and $\sigma,j$ are such that }
(0,j)\in\Xi_n, \sigma\ne 0
\right\}\\
&
\cup \left\{\nnu{\cal E}_\sigma(B^{(N)}_j)\mid
(\sigma,j)\in \Xi_n,\ \sigma\ne 0, 
\mbox{ and }
U_j=[-1,1]
\right\}.
\end{align*}


\begin{lemma}\label{lem:0000}
For every $N \ge n$ and for every piecewise constant $M: [0,T] \to \mathcal{W}_{n,N}$  there exist 
$\alpha:[0,T]\to \{0,1\}$, $v:[0,T]\to U$ piecewise constant and a sequence $(\omega_{h}(\cdot))_{h\in \NN}$ of
 continuous and piecewise affine functions from $[0,T]$ to $[0, \infty)$ with $\dot \omega_h\ge 1$
 almost everywhere, such that
\begin{align*}
\left\|\int_{0}^{t} (\alpha(s)A^{(N)}+\Theta^{(N)}(\omega_{h}(s),v_{1}(s),\ldots,v_{p}(s))) ds \right.
 \left. - \int_{0}^{t}M(s)ds\right\| \to 0
\end{align*}
uniformly with respect to $t \in [0,T]$ as $h$ tends to infinity. 
\end{lemma}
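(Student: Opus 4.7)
The plan is to reduce the problem, by concatenation and additivity, to the case when $M \equiv M_0$ is constantly equal to a single matrix $M_0 \in \mathcal{W}_{n,N}$ on $[0,T]$, and then to consider separately the four families of matrices defining $\mathcal{W}_{n,N}$.

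In the two easy cases, no convexification is needed. If $M_0 = A^{(N)}$, I would take $\alpha \equiv 1$, $v \equiv 0$, and $\omega_h(t) = t$ for every $h$; the difference of the integrals then vanishes identically. If $M_0 = \mathcal{E}_0(B_j^{(N)})$ with $(0,j) \in \Xi_n$, I would take $\alpha \equiv 0$, $v$ with $j$-th component $v_j \equiv 1$ and other components zero (admissible since $1 \in U_j$ in both possible cases $U_j=[0,1]$ and $U_j=[-1,1]$), and $\omega_h(t) = ht$. From \eqref{m-Theta}, the $(l,k)$ entry of $\Theta^{(N)}(\omega_h(s), v)$ equals $e^{i(\lambda_k - \lambda_l)hs}(B_j)_{lk}$; integrating over $[0,t]$, the entries with $\lambda_l = \lambda_k$ yield exactly $t(B_j)_{lk}$, while those with $\lambda_l \neq \lambda_k$ contribute a quantity of modulus bounded by $C/h$, uniformly in $t$.

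The remaining two families involve $\nnu\mathcal{E}_\sigma(B_j^{(N)})$ with $\sigma \neq 0$ and require Lemma~\ref{lem:convexification}. Apply that lemma with $\gamma_1 = \sigma$ and the remaining $\gamma_i$ enumerating the distinct strictly positive elements of the finite set $\Sigma_n \setminus \{\sigma\}$ (so that $|\gamma_1|\ne|\gamma_i|$ for $i\geq 2$), obtaining times $\tau_1 < \tau_2 < \cdots$ with diverging gaps whose complex exponentials Ces\`aro-average to $(\nnu,0,\ldots,0)$. I would then design $\omega_h$ piecewise affine of slope $1$ on a sequence of $K_h$ short equal subintervals of $[0,T]$, arranged so that $\omega_h$ takes the values $\tau_1,\ldots,\tau_{K_h}$ at the starting points of these subintervals (with $K_h\to\infty$), and of large finite slope on the connecting intervals so as to ``jump'' between consecutive prescribed phases while keeping $\dot\omega_h\geq 1$ almost everywhere. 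For the family with $U_j=[0,1]$, take $v_j\equiv 1$: the $\lambda_l=\lambda_k$ entries accumulate to the $\mathcal{E}_0(B_j^{(N)})$ part, the $\pm\sigma$-gap entries Ces\`aro-average to $\nnu\mathcal{E}_\sigma(B_j^{(N)})$, and the remaining nonzero-gap entries vanish in the limit, matching the target $\mathcal{E}_0(B_j^{(N)}) + \nnu\mathcal{E}_\sigma(B_j^{(N)})$. For the family with $U_j=[-1,1]$, I would alternate $v_j$ between $+1$ and $-1$ on successive subintervals so as to cancel the zero-frequency $\mathcal{E}_0$ contribution while preserving the $\sigma$-resonant one, leaving only $\nnu\mathcal{E}_\sigma(B_j^{(N)})$.

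The main obstacle is upgrading the averaged convergence provided by Lemma~\ref{lem:convexification} to convergence that is genuinely uniform in $t\in[0,T]$. I would handle this by partitioning $[0,T]$ into many small pieces on each of which the averaging is carried out to sharper and sharper accuracy as $h\to\infty$, and by explicitly bounding the running-partial-sum deviation of Ces\`aro means of uniformly bounded sequences, in the spirit of the single-input construction of~\cite{ancoranoi}. A secondary technicality is building a piecewise-affine $\omega_h$ with slope uniformly $\geq 1$ that nevertheless passes through prescribed phase values at prescribed times; this is resolved by inserting steep but finite connecting segments on vanishingly short intervals between the ``active'' subintervals.
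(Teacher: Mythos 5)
Your overall structure matches the paper's proof closely: fix $\alpha,v$ according to which matrix in $\mathcal{W}_{n,N}$ is active, invoke Lemma~\ref{lem:convexification} for the resonant families, and build $\omega_h$ by smoothing a piecewise-constant phase schedule into a piecewise-affine function with slope at least $1$. Your treatment of the $U_j=[-1,1]$ family by alternating the sign of $v_j$ is, in substance, the paper's decomposition \eqref{con-s} followed by a diagonal procedure. Two remarks are in order.

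There is, however, a genuine error in the application of Lemma~\ref{lem:convexification}: you take $\{\gamma_2,\ldots,\gamma_\kappa\}$ to enumerate the nonzero gaps of $\Sigma_n\setminus\{\sigma\}$, but since $M$ and $\Theta^{(N)}$ are $N\times N$ matrices and the target estimate must hold for every entry $(l,m)$ with $1\le l,m\le N$, the set of frequencies to be suppressed must be $\Sigma_N\setminus\{\sigma\}$, not $\Sigma_n\setminus\{\sigma\}$ (as in the paper). With only $\Sigma_n$ in play, any entry $(B_j)_{lm}\neq 0$ with $\max(l,m)>n$ and $|\lambda_l-\lambda_m|\notin\Sigma_n\cup\{\sigma\}$ would survive the averaging and spoil convergence to the block matrix in $\mathcal{W}_{n,N}$. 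The fix is immediate—replace $\Sigma_n$ by $\Sigma_N$ (which is still finite)—but as written the argument does not close. Relatedly, remember that the $\tau_0$ parameter in Lemma~\ref{lem:convexification} must be taken equal to the terminal value of $\omega_h$ on the previous subinterval of constancy of $M$, so that your concatenated $\omega_h$ stays continuous and nondecreasing across pieces; similarly, in your $M_0=\mathcal{E}_0(B_j^{(N)})$ step you should use $\omega_h(t)=\omega_h(t_0)+h(t-t_0)$ rather than $\omega_h(t)=ht$ when the active subinterval does not start at $t=0$.

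On the $M_0=\mathcal{E}_0(B_j^{(N)})$ case your argument is actually a small but genuine simplification of the paper's. The paper handles it through Lemma~\ref{lem:convexification} with an artificial frequency $\gamma_1\in(0,\infty)\setminus\Sigma_N$, whereas your choice $\omega_h$ of constant slope $h$ gives $\left|\int_{t_0}^{t}e^{i(\lambda_k-\lambda_l)\omega_h(s)}ds\right|\le 2/(|\lambda_k-\lambda_l|h)$ for $\lambda_l\ne\lambda_k$, which is an explicit $O(1/h)$ bound uniform in $t$, with no smoothing step required since the phase is already affine with admissible slope. This buys nothing asymptotically but is cleaner, and it makes transparent why the $\mathcal{E}_0$ case needs no convexification.
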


\begin{proof}
Let $N \geq n$.
Let us fix $\alpha$ and $v_1,\dots,v_p$ at each $t\in[0,T]$ as follows: if $M(t)=A^{(N)}$ then $\alpha(t)=1$ and $v_1(t)=\dots=v_p(t)=0$; otherwise, if $M(t)=\mathcal{E}_{0}(B_j^{(N)})$, 
$M(t)=\mathcal{E}_0(B_j^{(N)})+\nnu \mathcal{E}_\sigma(B_j^{(N)})$, or $M(t)=\nnu\mathcal{E}_{\sigma}(B_j^{(N)})$
for some $j$, 
then take such a $j$ minimal and set $v_j(t)=1$ and $\alpha(t)=v_k(t)=0$ for $k\ne j$.

We are going to 
apply Lemma~\ref{lem:convexification} for
every interval on which $M(\cdot)$ is constant. 
Fix $\omega_h(0)=0$ for every $h$. Take an interval $(t_0,t_1)$ on which $M(\cdot)$ is constant and assume that $\omega_h(t_0)$ has been computed. We next extend $\omega_h$ on $(t_0,t_1)$. 

If $\alpha=1$ on $(t_0,t_1)$ then take $\omega_h(\tau)=\omega_h(t_0)+\tau-t_{0}$ for every $\tau \in (t_{0},t_{1})$.

Otherwise, let
$v_j=1$ on $(t_0,t_1)$ and assume for now that 
$M(t)=\mathcal{E}_0(B_j^{(N)})+\nnu \mathcal{E}_\sigma(B_j^{(N)})$. 
Apply Lemma~\ref{lem:convexification} with $\gamma_{1} = \sigma$, 
$\{\gamma_{2}, \ldots,\gamma_{\kappa}\} =  
\Sigma_{N} \setminus \{\sigma\}$, $\xi=1$,
$R=T$,
and $\tau_0 = \omega_h(t_0)$.
Then there exists a sequence 
$(\tau_{k})_{k=1}^{\infty}$
such that 
$\tau_1\geq \omega_h(t_0)$, 
$
\tau_{k+1} - \tau_{k} > T
$,
and 
$$
\lim_{K\to \infty } \frac{1}{K} \sum_{k=1}^{K} (e^{i\tau_k \gamma_{1}}, \ldots, e^{i\tau_k \gamma_{\kappa}}) = (\nnu ,0,\ldots,0).
$$
In particular there exists $K=K(h)$ such that
\begin{align*}
\left|\frac{1}{K} \sum_{k=1}^{K} e^{i(\lambda_{l} - \lambda_{m}) \tau_{k} } 
b_{ml}^{(j)} 
- \left( 
\mathcal{E}_0(B_j^{(N)})+\nnu \mathcal{E}_\sigma(B_j^{(N)}) 
\right)_{m,l}
\right| < \frac 1 h,
\end{align*}
for every $1 \leq l,m \leq N$.

Consider the piecewise constant function $Y: (t_{0},t_{1}) \to \RR$ defined as follows:
set $s_{\alpha} = t_{1}+ (t_{1} - t_{0}){\alpha/K}$, $\alpha = 0,\ldots, K$, and let
\begin{equation*}
Y(t) =\omega_{h}(t_{0}) + \sum_{\alpha=1}^{K} 
\tau_{\alpha}  \chi_{[s_{\alpha-1}, s_{\alpha})}(t)\,.
\end{equation*}

Following the smoothing procedure of~\cite[Proposition~5.5]{ancoranoi} one can construct  a continuous piecewise affine approximation $\omega_{h}:[t_{0},t_{1}] \to \RR$ of $Y$ with $\dot \omega_h\geq 1$ 
almost everywhere such that 
\begin{align}\label{c-i}
\left\|\int_{t_{0}}^{t} (\Theta^{(N)}(\omega_{h}(s),v_{1}(s),\ldots,v_{p}(s))) ds  - \int_{t_0}^{t}M(s)ds\right\| \to 0
\end{align}
uniformly with respect to $t \in [t_{0},t_{1}]$ as $h$ tends to infinity.

The same argument can be carried out in the case  in which $M(t)=\mathcal{E}_0(B_j^{(N)})$  by applying Lemma~\ref{lem:convexification} with $\gamma_{1} $ in $(0,\infty)\setminus\Sigma_{N}$, 
$\{\gamma_{2}, \ldots,\gamma_{\kappa}\} =  
\Sigma_{N}$, $\xi=1$,
$R=T$,
and $\tau_0 = \omega_h(t_0)$.

The final case to be considered is when $M(t)=\nnu\mathcal{E}_\sigma(B_j^{(N)})$ with $\sigma\ne 0$, $(\sigma,j)\in \Xi_n$, and 
$U_j=[-1,1]$. 
Notice that 
\begin{equation}\label{con-s}
\nnu\mathcal{E}_\sigma(B_j^{(N)})=\frac{(\mathcal{E}_0(B_j^{(N)})+\nnu \mathcal{E}_\sigma(B_j^{(N)}))-(\mathcal{E}_0(B_j^{(N)})+\nnu J_{-1}(\mathcal{E}_\sigma(B_j^{(N)})))}2.
\end{equation}

The argument above can be easily adapted to matrices $M(t)$ of the type  
$v_j(\mathcal{E}_0(B_j^{(N)})+\nnu J_{\xi}(\mathcal{E}_\sigma(B_j^{(N)})))$, with $v_j\in U_j$, $\xi\in S^1$ (just not imposing  $\xi=1$ while applying Lemma~\ref{lem:convexification}), and in particular to 
$-(\mathcal{E}_0(B_j^{(N)})+\nnu J_{-1}(\mathcal{E}_\sigma(B_j^{(N)})))$. 

It suffices then to introduce a sequence $(M^h)_{h\in \NN}$ of 
piecewise constant functions with values in 
$$
\{ v_j(\mathcal{E}_0(B_j^{(N)})+\nnu J_{\xi}(\mathcal{E}_\sigma(B_j^{(N)})))\mid v_j\in U_j,\ \xi\in S^1\}
$$
such that $\int_{t_0}^t M^h(s)ds $ converges uniformly for $t\in [t_0,t_1]$ to 
$\int_{t_0}^t M(s)ds $ as $h$ tends to infinity and to apply a diagonal procedure based on the approximation  introduced above. 
\end{proof}

As a consequence of the lemma above and thanks to~\cite[Lemma~8.2]{book2}, we have 
\begin{align}\label{justabove}
 \left\| 
 \rexp{t}{
 \left(\alpha(s)A^{(N)}+\Theta^{(N)}(\omega_{h}(s),v_{1}(s),\ldots,v_{p}(s))\right)}{s} 
- \rexp{t}{M(s)}{s}
 \right\|
 \to 0
\end{align}
uniformly with respect to $t \in [0,T]$  as $h$ tends to infinity.

\subsection{Control of the infinite-dimensional system}\label{sec:infinitedimension}
Next proposition states that we can pass to the limit as $N$ tends to infinity without losing the controllability property 
\eqref{justabove}.
Its proof is based on the special sparsity structure
of the matrices in  $\mathcal{W}_{n,N}$,
guaranteeing that the difference between the dynamics of the infinite-dimensional system and the dynamics of the Galerkin approximations is small.

We introduce the following notation: given $n \in \NN$ and a bounded linear transformation $L$ of $\H$, let $\crop_n(L)$ be the $n\times n$ matrix $(\langle \phi_j,L\phi_k\rangle)_{j,k=1}^n$. We use the same symbol $\crop_n$ to denote the similar cropping operation acting on the space of $N\times N$ matrices, with $N\geq n$.

\begin{proposition}\label{prop:2410}
Let $n \in \NN$ and $M:[0,T] \to \mathcal{W}_{n}$ be   piecewise constant. Then, for every $\varepsilon>0$, there exist piecewise constant controls $z:[0,T] \to [1,\infty)$ and $v:[0,T] \to U$
and a continuous piecewise affine function $\omega$ with $\dot \omega \geq 1$ 
such that
the propagator $\Psi$ of~\eqref{eq:main-interaction} satisfies
$$
\left\| 
\rexp{t}{M(s)}{s}   -  
\crop_{n} (\Psi_{t}) 
\right\|
<\varepsilon,
$$
for every 
$t \in [0,T]$.
\end{proposition}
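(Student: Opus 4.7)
The plan is to combine three ingredients: the block-sparse structure of matrices in $\mathcal{W}_{n,N}$ guaranteed by the defining condition of $\Xi_n$, the convexification in the Galerkin approximation (Lemma~\ref{lem:0000}), and a Duhamel estimate exploiting that sparsity to control the gap between the Galerkin propagator and the full infinite-dimensional propagator.

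First, for each $N\geq n$, I would lift $M:[0,T]\to\mathcal{W}_n$ to $\tilde M_N:[0,T]\to\mathcal{W}_{n,N}$ by replacing in every piece $A^{(n)}$ with $A^{(N)}$, $\mathcal{E}_\sigma(B^{(n)}_j)$ with $\mathcal{E}_\sigma(B^{(N)}_j)$, and similarly for the combined terms. By~\eqref{sigmabar}, each value of $\tilde M_N$ is block-diagonal for the splitting $\CC^N = \CC^n \oplus \CC^{N-n}$, with the $n\times n$ upper-left block equal to the corresponding element of $\mathcal{W}_n$. Such block-diagonality is preserved under multiplication, hence under chronological products, giving
\[
\crop_n\!\Bigl(\vrexp\int_0^t \tilde M_N(s)\,ds\Bigr) \;=\; \vrexp\int_0^t M(s)\,ds
\]
for every $N\geq n$ and $t\in[0,T]$.

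Second, I would apply Lemma~\ref{lem:0000} to $\tilde M_N$, obtaining piecewise constant $\alpha_N\in\{0,1\}$, $v_N\in U$, and continuous piecewise affine $\omega_{N,h}$ with $\dot\omega_{N,h}\geq 1$, for which the order-$N$ Galerkin propagator $\Psi^{(N),h}_t$ converges to $\vrexp\int_0^t\tilde M_N(s)\,ds$ uniformly in $t\in[0,T]$ as $h\to\infty$. The triple $(\alpha_N,v_N,\omega_{N,h})$ is admissible for~\eqref{eq:main-interaction} and produces the infinite-dimensional propagator $\Psi_t$ whose cropping we wish to bound; it remains to control the discrepancy $\crop_n(\Psi_t)-\crop_n(\Psi^{(N),h}_t)$.

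The delicate step and main obstacle is this last comparison. For $\phi_k$ with $k\leq n$, Duhamel yields
\[
\bigl\|\pi_n(\Psi_t-\Psi^{(N),h}_t)\phi_k\bigr\|\;\leq\;\int_0^t\bigl\|(\mathrm{Id}-\pi_N)\, B_{v_N(s)}\, e^{\omega_{N,h}(s)A}\, \Psi^{(N),h}_s\phi_k\bigr\|\,ds,
\]
where $B_v=\sum_j v_j B_j$; the contribution of $\alpha A$ drops since $A$ preserves $\mathrm{range}(\pi_N)$. For bounded but possibly non-compact $B_j$, the operator norm $\|(\mathrm{Id}-\pi_N)B_j\pi_N\|_{\mathrm{op}}$ need not vanish as $N\to\infty$, so a naive Galerkin bound fails. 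The block structure of Step~1 saves the argument: taking $h$ large enough makes $\Psi^{(N),h}_s\phi_k$ arbitrarily close to the block-diagonal-flow value $\vrexp\int_0^s\tilde M_N(r)\,dr\,\phi_k\in\mathrm{range}(\pi_n)$, so the integrand is well-approximated by $\|(\mathrm{Id}-\pi_N)B_j\phi\|$ for $\phi\in\mathrm{range}(\pi_n)$. Since each $B_j\phi_k\in\mathcal{H}$, one has $\max_{k\leq n,\,j\leq p}\|(\mathrm{Id}-\pi_N)B_j\phi_k\|\to 0$ as $N\to\infty$. Choosing first $N$ large enough to kill this pointwise tail, then $h$ large enough both to confine the trajectory near $\mathrm{range}(\pi_n)$ and to absorb the convexification error from Lemma~\ref{lem:0000}, a triangle inequality combined with the identity from Step~1 yields the required estimate.
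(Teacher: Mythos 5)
Your proof is correct and arrives at the statement by a route that closely parallels the paper's but implements the key estimate differently. The shared skeleton: lift $M$ to a block-diagonal $\hat M$ with values in $\mathcal{W}_{n,N}$ so that $\crop_n$ of its chronological flow reproduces that of $M$; invoke Lemma~\ref{lem:0000} and \cite[Lemma~8.2]{book2} to make the order-$N$ Galerkin propagator uniformly close to that block flow; and take $N$ depending on $\varepsilon$ first, the convexification index $h$ depending on $N$ afterwards. Where you diverge is in comparing $\crop_n(\Psi_t)$ to the Galerkin flow. The paper works with the cropped matrix $Q^{(N)}_t=\crop_N\Psi_t$, writes $\dot Q^{(N)}=(\alpha A^{(N)}+\Theta^{(N)})Q^{(N)}+R^{(N)}$, and bounds $(R^{(N)}_t)_{jk}$ for $j\leq n$ entrywise via Cauchy--Schwarz using the row-tail condition $\|((B_l)_{jk})_{k>N}\|_{\ell^2}<\mu$; the variation-of-constants integral is then cropped using the block structure of the chronological flow of $\hat M$, so the remainder estimate never requires knowing where $\Psi_t$ or the Galerkin trajectory live. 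You instead apply Duhamel to the operator difference $\Psi_t-\Psi^{(N),h}_t$ and control the integrand $(\mathrm{Id}-\pi_N)B_{v}e^{\omega A}\Psi^{(N),h}_s\phi_k$ by first forcing the Galerkin trajectory, for initial data $\phi_k$ with $k\leq n$, into $\mathrm{range}(\pi_n)$ (which requires $h$ large for the fixed $N$) and then using that $\|(\mathrm{Id}-\pi_N)B_j\pi_n\|\to 0$ as $N\to\infty$. By skew-adjointness of the $B_j$ the row-tail and column-tail estimates are equivalent, so the two proofs rest on the same $\ell^2$ fact; your Duhamel phrasing is a touch more operator-theoretic, makes the role of trajectory localization explicit, and is a valid substitute — at the cost of one extra approximation step that the paper's cropped-ODE argument sidesteps by cropping after, rather than before, invoking the block structure.
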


\begin{proof} Consider $\mu > 0$ to be fixed later.
For every $j\in\NN$ the hypothesis that $\phi_j$ belongs to $D(B_{l})$ implies that
the sequence $((B_{l})_{jk})_{k\in \NN}$ is in $\ell^2$ for every $l=1,\ldots,p$. 
It is therefore possible to choose
$N\geq n $ such that
$\|((B_{l})_{jk})_{k>N}\|_{\ell^{2}} < \mu$
for every $j=1,\dots, n$ and $l=1,\ldots,p$. 

Let $\hat M$ be a piecewise constant function from $[0,T]$ to $\mathcal{W}_{n,N}$ such that $\crop_{n} \hat M(t) = M(t)$ for every $t$ in $[0,T]$.
Because of the definition of $\Xi_n$ and of 
the classes $\mathcal{W}_{n,N}$ and $\mathcal{W}_{n}$ we have

$$
\rexpi{s}{t}{\hat M(\tau)}{\tau}  = 
\left(  \begin{array}{c|c}
\displaystyle\rexpi{s}{t}{M(\tau)}{\tau} &\phantom{\int} 0\phantom{\int}\\[3mm]
\hline\\[-3mm]
0 & 
*
\end{array}
\right).
$$

By Lemma~\ref{lem:0000}, for every $\eta > 0$, there exist piecewise constant controls
$\alpha:[0,T]\to \{0,1\}$, $v:[0,T]\to U$ and a continuous piecewise affine function $\omega$ with $\dot \omega \geq 1$ 
such that 
$$
\left\| 
 \rexpi{s}{t}{
 \left(\alpha(\tau)A^{(N)}+\Theta^{(N)}(\omega(\tau),v_{1}(\tau),\ldots,v_{p}(\tau))\right)}{\tau} 
- \rexpi{s}{t}{\hat M(\tau)}{\tau}
 \right\| < \eta
 $$
for every $s,t$ in $[0,T]$.

Consider the solution $\Psi$ of \eqref{eq:main-interaction} associated with $\alpha$, $\omega$ and $v$. 
Set, for $k \in \NN$,
$$
Q^{(k)}_{t} = \crop_{k} \Psi_{t}.
$$
Now
$$
\dot Q^{(N)}_{t} = \left( \alpha A^{(N)}+ \Theta^{(N)}(\omega, v_{1},\ldots,v_{p})  \right) Q^{(N)}_{t} + R^{(N)}_{t},
$$
and the choice of $N$ is such that
\begin{equation}\label{eq:7575}
|(R^{(N)}_{t})_{j,k}| \leq \mu,
\end{equation}
for every $j=1,\ldots,n$ and $k =1,\ldots, N$.
Notice, moreover, that the norm of $R^{(N)}_{t}$ can be uniformly bounded by a positive constant $C$ independent of $\eta$ (possibly depending on  $N$ and 
hence on $\mu$).

By the variation formula and since $Q^{(n)}_{t} = \crop_{n}(Q^{(N)}_{t} )$ we have 
\begin{align*}
Q^{(n)}_{t} &= \crop_{n} 
\left[\rexp{t}{
 \left( \alpha(\tau) A^{(N)}+ \Theta^{(N)}(\omega(\tau), v_{1}(\tau),\ldots,v_{p}(\tau))  \right) 
}{\tau} \right.
+\\
&\quad
\left.
\int_{0}^{t}
\left( \rexpi{s}{t}{
 \left( \alpha(\tau) A^{(N)}+ \Theta^{(N)}(\omega(\tau), v_{1}(\tau),\ldots,v_{p}(\tau))  \right) 
}{\tau}
 \right)
 R^{(N)}_{s}
 ds
 \right],
\end{align*}
so that
\begin{align*}
\Big\|\crop_{n} & \left( \Psi_{t} - 
\rexp{t}{ \left( \alpha(\tau) A^{(N)}+ \Theta^{(N)}(\omega(\tau), v_{1}(\tau),\ldots,v_{p}(\tau))  \right)}{\tau} \right)\Big\|
\\
&\quad \leq   t \eta C + \left\|\int_{0}^{t} \left(\rexpi{s}{t}{M(\tau)}{\tau} \right)
\crop_{n} \left( R^{(N)}_{s} \right) ds\right\|.
\end{align*}
The norm of the matrix product 
$$
\rexpi{s}{t}{M(\tau)}{\tau} \crop_{n} \left( R^{(N)}_{s} \right)
$$
is equal to 
$$
\|\crop_{n} R^{(N)}_{s}\|. 
$$
The \emph{max} norm of $\crop_{n} R^{(N)}_{s}$ is smaller than 
$\mu$ as it follows from~\eqref{eq:7575}. Hence
\begin{align*}
\Big\|\crop_{n} & \left( \Psi_{t} - 
\rexp{t}{ \left( \alpha(\tau) A^{(N)}+ \Theta^{(N)}(\omega(\tau), v_{1}(\tau),\ldots,v_{p}(\tau))  \right)}{\tau} \right)\Big\|
\\
& \quad \leq T(\eta C + \sqrt{n} \mu).
\end{align*}
The constant  $T(\eta C + \sqrt{n} \mu)$ can be made arbitrarily small by choosing $\mu$ small with respect to $n$ and $T$  and then $\eta$ small with respect to $C=C(\mu)$ and $T$.
\end{proof}

\begin{proof}[Proof of Theorem~\ref{metalemma}]
Let $r$ in $ \NN$, $\psi_1,\ldots,\psi_{r}$ in $\H$, $\hat\Upsilon$ in $\mathbf{U}(\H)$, and $\vep>0$. 
Let $n$ be as in Section~\ref{Wn} and let $g \in SU(n)$ satify~\eqref{eq:1402}. 
Notice that if $\psi_1,\ldots,\psi_{r},\hat\Upsilon(\psi_1),\ldots,\hat\Upsilon(\psi_{r})$ are in $\mathcal{L}$ then $n$ can be taken independently of $\eps$. 

From Section~\ref{Wn}, there exists 
$M:[0,T] \to \mathcal{W}_{n}$ such that 
$$ 
\rexp{T}{M(s)}{s} =g.
$$

Proposition~\ref{prop:2410} ensures the existence of two piecewise constant functions $z$ and $v$ and of a continuous piecewise affine function $\omega$ with $\dot \omega \geq 1$ almost everywhere such that the associated propagator $\Psi$ of~\eqref{eq:main-interaction} satisfies
$$
\left\| 
\rexp{T}{M(s)}{s}   -  
\crop_{n} (\Psi_{T}) 
\right\|
<\varepsilon.
$$
If $\psi_1,\ldots,\psi_{r},\hat\Upsilon(\psi_1),\ldots,\hat\Upsilon(\psi_{r})$ are in $\mathcal{L}$ then $T$ is independent of $\eps$. 
By Proposition~\ref{prop:implies} system~\eqref{eq:main} is $L^1$-bounded approximately simultaneously controllable.
\end{proof}

\section{Proof of Theorem~\ref{metalemma-tracking}}\label{sec:tracking}

In order to prove Theorem~\ref{metalemma-tracking} we adapt 
the proof of Theorem~\ref{metalemma}.
The key point of the argument is the following: it has been proved in Proposition~\ref{prop:2410}  that system~\eqref{eq:main-repar}  can {track} every trajectory of~\eqref{eq:Mausiliario}. 
The idea is  
to replace \eqref{eq:Mausiliario} by a 
system which can {track} with arbitrary precision every trajectory in $SU(n)$. 
The crucial property,  beyond the Lie bracket generating condition,
that the new version of \eqref{eq:Mausiliario} should satisfy in order to 
achieve this goal is that it is a driftless system (i.e., the time-reversal of each of its  admissible trajectories is itself admissible).

The same time-reparameterization and time-dependent change of coordinates as in Section~\ref{s-time} allows to consider the tracking problem 
for system \eqref{eq:main-interaction}
instead of system~\eqref{eq:main}. As in the previous section we consider $\delta$ to be renormalized to $1$.

We can then base our argument on  the following analogue of Proposition~\ref{prop:implies}. 
 \begin{proposition}\label{prop:implies-track}
If \eqref{eq:main-interaction} is a stalker then \eqref{eq:main} is a stalker as well. 
 \end{proposition}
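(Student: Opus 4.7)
The plan is to mirror the proof of Proposition~\ref{prop:implies}, but to replace the torus-density argument by the observation that every reduction used in passing from~\eqref{eq:main} to~\eqref{eq:main-interaction} preserves the modulus projection $\abs$. Specifically, the time-dependent change of coordinates $y(t)=e^{-\omega(t)A}\psi(t)$ introduced in Section~\ref{sec:intfra} satisfies $\abs(y(t))=\abs(\psi(t))$ thanks to~\eqref{eq:1309}, while the time-reparametrization $s(t)=\int_0^t z(\tau)d\tau$ of Section~\ref{s-time} only relabels time and thus leaves $\abs$ along trajectories unchanged. Since the stalking property is formulated purely in terms of $\abs$, chaining these two invariances will transport the tracking property from~\eqref{eq:main-interaction} directly to~\eqref{eq:main}.

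Concretely, I would fix $r\in\NN$, $\psi_1,\ldots,\psi_r\in\H$, a continuous $\hat\Upsilon:[0,T]\to\mathbf{U}(\H)$ with $\hat\Upsilon_0=\mathrm{Id}_\H$, and $\varepsilon>0$, and apply the stalker assumption for~\eqref{eq:main-interaction} to these data. This produces piecewise constant $\alpha:[0,T_0]\to\{0,1\}$, $v:[0,T_0]\to U$, and a continuous piecewise affine $\omega$ with $\dot\omega\geq 1$, together with an invertible continuous reparametrization $\tau:[0,T]\to[0,T_0]$, such that the corresponding solution $y(\cdot)$ of~\eqref{eq:main-interaction} satisfies $\|\abs(\hat\Upsilon_t\psi_k)-\abs(y(\tau(t)))\|<\varepsilon$ for every $t\in[0,T]$ and every $k=1,\ldots,r$. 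Setting $z=\dot\omega+\alpha\geq 1$, the function $\psi(t)=e^{\omega(t)A}y(t)$ solves~\eqref{eq:main-repar} with the piecewise constant controls $(z,v)$, and $\abs(\psi(t))=\abs(y(t))$ by~\eqref{eq:1309}.

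To finish I would apply the time change of Section~\ref{s-time}: the map $s(t)=\int_0^t z(\tau)d\tau$ is continuous and strictly increasing, so $\tilde\tau:=s\circ\tau:[0,T]\to[0,s(T_0)]$ is continuous and invertible; the reparametrized trajectory $\tilde\psi(\sigma)=\psi(s^{-1}(\sigma))$ solves~\eqref{eq:main} with piecewise constant controls $\tilde u_j(\sigma)=v_j(s^{-1}(\sigma))/z(s^{-1}(\sigma))\in U_j$, so that $\pro^{\tilde u}_{\tilde\tau(t)}\psi_k=\psi(\tau(t))$. Combining the two identities gives $\|\abs(\hat\Upsilon_t\psi_k)-\abs(\pro^{\tilde u}_{\tilde\tau(t)}\psi_k)\|<\varepsilon$ for every $t\in[0,T]$ and $k=1,\ldots,r$, establishing that~\eqref{eq:main} is a stalker. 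I do not anticipate any substantial obstacle: the argument is essentially a bookkeeping exercise chaining the two modulus-preserving coordinate changes already set up in the paper; the only mild care needed is to verify that $s\circ\tau$ remains an admissible (continuous and invertible) time reparametrization, which is immediate from the strict monotonicity of $s$ (ensured by $z\geq 1$) and of $\tau$.
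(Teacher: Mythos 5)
Your argument is correct and coincides with what the paper intends: the paper states Proposition~\ref{prop:implies-track} without a written proof, deferring to the remark preceding it that ``the same time-reparameterization and time-dependent change of coordinates as in Section~\ref{s-time} allows to consider the tracking problem for system~\eqref{eq:main-interaction} instead of system~\eqref{eq:main}.'' Your write-up carries out exactly that program, and it correctly identifies why the torus-density argument used in the proof of Proposition~\ref{prop:implies} is superfluous here: the stalking property is formulated purely in terms of the modulus projection $\abs$, which both the change of coordinates $y=e^{-\omega A}\psi$ (via~\eqref{eq:1309}) and the monotone time reparametrization $s(t)=\int_0^t z$ preserve, so no phase-fixing step is needed.
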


The following proposition allows to reduce a tracking problem 
in the space of unitary operators of $\H$ into 
a tracking problem in $SU(n)$ for $n$ large enough. Its proof can be found in \cite[Proposition~5.7]{ancoranoi}. 

\begin{proposition}\label{prop:tommaso}
Let $\hat\Upsilon:[0,T]\rightarrow \mathbf{U}(\H)$ be a continuous curve. 
Take $\vep>0$ and $m \in \NN$. 
Then for $n \geq m$ sufficiently large there exists 
a continuous curve $g:[0,T]\rightarrow SU(n)$ such that 
$| \langle \phi_{j},\hat\Upsilon(t) \phi_k \rangle  -  \langle e_{j},g(t) e_k \rangle| <\vep$ for every $t$ in $[0,T]$, $1 \leq k\leq m$, and $j=1,\dots,n$, where $e_1,\dots,e_n$ denotes the canonical basis of $\RR^n$.
\end{proposition}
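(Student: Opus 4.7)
The plan is to construct $g(t)$ in three stages: build an $n\times m$ block that approximates the action of $\hat\Upsilon(t)$ on $\phi_1,\ldots,\phi_m$, extend this block continuously to an $n\times n$ unitary matrix, and correct a global phase so that the result lies in $SU(n)$.

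For the first stage, since $\hat\Upsilon$ is continuous on the compact interval $[0,T]$, the set $\{\hat\Upsilon(t)\phi_k:t\in[0,T],\ k=1,\ldots,m\}$ is compact in $\H$, so for every $\mu>0$ one can pick $n\geq m$ large enough that
$$
\sup_{t\in[0,T]}\sup_{k=1,\ldots,m}\bigl\|(\Id-\pi_n)\hat\Upsilon(t)\phi_k\bigr\|<\mu.
$$
Let $\Pi_n(t)$ be the $n\times m$ matrix with entries $\langle\phi_j,\hat\Upsilon(t)\phi_k\rangle$. Then $\Pi_n(t)^*\Pi_n(t)$ is $O(\mu)$-close to the $m\times m$ identity, so for $\mu$ small the polar decomposition $\Pi_n(t)=\widetilde\Pi_n(t)\bigl(\Pi_n(t)^*\Pi_n(t)\bigr)^{1/2}$ produces a continuous map $t\mapsto\widetilde\Pi_n(t)$ into $n\times m$ isometries, within $O(\mu)$ of $\Pi_n(t)$ in operator norm.

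Next, I would extend $\widetilde\Pi_n(t)$ continuously to an $n\times n$ unitary matrix $\widetilde g(t)$. The forgetful map from $U(n)$ to the Stiefel manifold of $m$-frames in $\CC^n$ is a fiber bundle with fiber $U(n-m)$; since $[0,T]$ is contractible, the pullback bundle along $\widetilde\Pi_n$ is trivial and admits a continuous section. Concretely, one can partition $[0,T]$ into subintervals on which $\widetilde\Pi_n$ varies by less than a chosen threshold, fix an orthonormal completion at each partition point, and adjust by Gram--Schmidt to patch across adjacent pieces. Finally, set $g(t)=\widetilde g(t)\,\mathrm{diag}(1,\ldots,1,\overline{\det\widetilde g(t)})$; since $m<n$, this modification leaves the first $m$ columns unchanged and forces $g(t)\in SU(n)$. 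Each entry $\langle e_j,g(t)e_k\rangle$ for $k\leq m$ is then $O(\mu)$-close to $\langle\phi_j,\hat\Upsilon(t)\phi_k\rangle$, and choosing $\mu$ small enough yields the required estimate.

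The main obstacle is the continuous frame-completion step: one must produce a recipe that is pointwise correct (unitary matrices) and simultaneously continuous in $t$. The contractibility of $[0,T]$ removes any homotopical obstruction and reduces this to a finite-patch argument, but care is needed to track how the approximation parameter $\mu$ propagates through the polar decomposition, the completion, and the final $SU$-phase correction, all while keeping the first $m$ columns close to their target values.
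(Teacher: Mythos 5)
Your argument is correct. Note that the paper itself does not prove this proposition but instead cites \cite{ancoranoi} (Proposition~5.7), so a line-by-line comparison is not possible; what you wrote is a valid self-contained proof. The three-stage structure is the natural one: (i) by strong continuity and compactness of $[0,T]$ the tail $(\Id-\pi_n)\hat\Upsilon(t)\phi_k$ is uniformly small for $k\le m$, so the $n\times m$ matrix $\Pi_n(t)$ has $\Pi_n(t)^*\Pi_n(t)$ uniformly close to $I_m$ and the polar isometry $\widetilde\Pi_n(t)=\Pi_n(t)\bigl(\Pi_n(t)^*\Pi_n(t)\bigr)^{-1/2}$ is continuous and entrywise close to $\Pi_n(t)$; (ii) the extension of the $m$-frame $\widetilde\Pi_n(t)$ to a continuous $U(n)$-valued curve is legitimate because $U(n)\to V_m(\CC^n)$ is a principal $U(n{-}m)$-bundle and every principal bundle over the contractible, paracompact base $[0,T]$ is trivial, hence admits a global section (your finite-partition Gram--Schmidt patching is just a hands-on version of the same); (iii) multiplying on the right by $\mathrm{diag}(1,\dots,1,\overline{\det\widetilde g(t)})$ is continuous, lands in $SU(n)$, and leaves the first $m\le n-1$ columns untouched, so no error is introduced in stages (ii)--(iii). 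The only source of error is stage (i), and it is $O(\mu^2)$, so choosing $\mu$ small gives the stated entrywise bound.
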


Let $n$ 
be chosen as in Proposition~\ref{prop:tommaso}. In accord with the 
\CSC, we can assume, without loss of generality, that $\mathrm{Lie}(\mathcal{V}_n) = \mathfrak{su}(n)$.

The roles played in Sections~\ref{Wn} and \ref{WnN} by $\mathcal{W}_n$ and $\mathcal{W}_{n,N}$ are now played by 
$\nnu\mathcal{V}_n$ and 
$\nnu\mathcal{V}_{n,N}$ with 
\begin{align*}
 \mathcal{V}_{n,N}
 = &
 \left\{ J_{\xi}( {\cal E}_\sigma(B^{(N)}_j)) \mid
(\sigma,j)\in \Xi_n, \sigma\ne 0, \xi \in S^{1} \right\}.
\end{align*}

In particular, we consider as
auxiliary control system 
\begin{equation}\label{eq:Mausiliario-track}
\dot x= M(t) x,\qquad M(t)\in \nnu\mathcal{V}_n,
\end{equation}
 $M$ being the matrix-valued control parameter. 
 It follows from the equality 
 $\mathrm{Lie}(\mathcal{V}_n)=  \mathfrak{su}(n)$
and Rashevski--Chow's theorem  that  
 every trajectory on $SU(n)$ 
can be tracked with arbitrarily precision (up to time-reparameterization)
by a trajectory of \eqref{eq:Mausiliario-track}.

The relation between the trajectories of
\eqref{eq:galerkin-N} and those of \eqref{eq:Mausiliario-track} (or, more precisely, 
$\dot x= M(t) x$, $M(t)\in \nnu\mathcal{V}_{n,N}$), is described by the following lemma. 
\begin{lemma}\label{lem:0000-t}
For every $N \ge n$ and for every piecewise constant $M: [0,T] \to \nnu\mathcal{V}_{n,N}$  there exist 
$\alpha:[0,T]\to \{0,1\}$, $v:[0,T]\to U$ piecewise constant and a sequence $(\omega_{h}(\cdot))_{h\in \NN}$ of
 continuous and piecewise affine functions from $[0,T]$ to $[0, \infty)$ with $\dot \omega_h\ge 1$
 almost everywhere, such that
\begin{align*}
{\Big\|\int_{0}^{t} \left(\alpha(s)A^{(N)}+\Theta^{(N)}\left(\omega_{h}(s),
v_{1}(s),\ldots,v_{p}(s)\right)\right) ds}
-\int_{0}^{t}M(s)ds\Big\| \to 0
\end{align*}
uniformly with respect to $t \in [0,T]$ as $h$ tends to infinity. 
\end{lemma}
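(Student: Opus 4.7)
The plan is to mimic the proof of Lemma~\ref{lem:0000}, with two simplifications and one essential modification. First, since $\nnu\mathcal{V}_{n,N}$ contains neither drift ($A^{(N)}$) nor diagonal-gap ($\mathcal{E}_0$) matrices, I would take $\alpha\equiv 0$ throughout and thereby enforce $\dot\omega_h\ge 1$ directly. Second, the convex-combination trick~\eqref{con-s} is no longer needed, since the target matrix $M(t)=\nnu J_\xi(\mathcal{E}_\sigma(B_j^{(N)}))$ is already a single ``$J_\xi$ block''. The essential modification is that the phase parameter $\xi$ appearing in Lemma~\ref{lem:convexification} must now be chosen to match the prescribed $\xi\in S^{1}$, rather than being fixed at $1$ as in the main case of Lemma~\ref{lem:0000}.

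Concretely, on each subinterval $(t_0,t_1)$ where $M\equiv\nnu J_\xi(\mathcal{E}_\sigma(B_j^{(N)}))$, I would set $v_j(t)\equiv 1$, $v_k(t)\equiv 0$ for $k\ne j$, and apply Lemma~\ref{lem:convexification} with $\gamma_1=\sigma$, with $\{\gamma_2,\ldots,\gamma_\kappa\}$ enumerating the distinct nonzero values of $\lambda_m-\lambda_l$ (for $l,m\le N$) of absolute value different from $\sigma$, and with $\xi$ as prescribed. This yields a sequence $(\tau_k)$ with $\tau_{k+1}-\tau_k>T$ and $\tau_1\ge\omega_h(t_0)$, along which the Cesaro averages of $e^{i\tau_k(\lambda_m-\lambda_l)}$ approach $\nnu\xi$ on the gap $\sigma$, decay to $0$ on the other selected frequencies, and, by conjugation of the real samples $\tau_k$, approach $\nnu\bar\xi$ on the gap $-\sigma$. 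Comparing with \eqref{m-Theta} and the definition of $J_\xi$, these limits are precisely the entries of $M(t)$. From here the construction proceeds exactly as in Lemma~\ref{lem:0000}: I would build a step function $Y(t)$ on $(t_0,t_1)$ taking the values $\tau_1,\ldots,\tau_K$ on $K$ uniform subintervals for sufficiently large $K=K(h)$, then smooth $Y$ into a continuous piecewise-affine $\omega_h$ with $\dot\omega_h\ge 1$ almost everywhere, via the procedure of~\cite[Proposition~5.5]{ancoranoi}, so that the integrated convergence analogous to~\eqref{c-i} holds uniformly on $(t_0,t_1)$. Performing this inductively on each interval of constancy of $M$, with the normalization $\omega_h(0)=0$, and concatenating the partial constructions, yields the desired $\alpha$, $v$, and $(\omega_h)_{h\in\NN}$ with the claimed uniform convergence on $[0,T]$.

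The one delicate point, and the main conceptual obstacle, is that the signed gaps $+\sigma$ and $-\sigma$ share a common absolute value, so Lemma~\ref{lem:convexification} cannot drive both of them to zero simultaneously. What saves the argument is that, once the $+\sigma$ phase average is designed to equal $\nnu\xi$, the $-\sigma$ average is automatically $\nnu\bar\xi$ by conjugation of real samples. This matches exactly the asymmetry built into the definition of $J_\xi$ (factor $\xi$ on entries with $\lambda_j<\lambda_k$, factor $\bar\xi$ on entries with $\lambda_j>\lambda_k$), which itself encodes the Hermitian structure that any matrix arising from time-averaging $\Theta^{(N)}$ along real samples must possess. It is this compatibility between the algebraic definition of $J_\xi$ and complex conjugation that makes the convexification of $\nnu J_\xi(\mathcal{E}_\sigma(B_j^{(N)}))$ possible without any additional ingredient beyond Lemma~\ref{lem:convexification}.
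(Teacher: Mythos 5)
Your simplification claiming that the convex-combination trick from~\eqref{con-s} is ``no longer needed'' is where the argument breaks down. With $v_j\equiv 1$, the Ces\`aro averaging of $\Theta^{(N)}(\omega_h(\cdot),v)$ produced by Lemma~\ref{lem:convexification} always leaves the zero-gap entries untouched: on any index pair $(l,m)$ with $\lambda_l=\lambda_m$ the phase factor $e^{i(\lambda_m-\lambda_l)\tau_k}$ equals $1$ for every sample $\tau_k$, so the Ces\`aro limit in those entries is simply $(B_j^{(N)})_{lm}$. (Note also that Lemma~\ref{lem:convexification} requires $\gamma_1,\dots,\gamma_\kappa\in\RR\setminus\{0\}$, so the frequency $0$ cannot be put into the list to be averaged away.) Consequently the limit you reach is $\mathcal{E}_0(B_j^{(N)})+\nnu J_\xi(\mathcal{E}_\sigma(B_j^{(N)}))$ and not the target $\nnu J_\xi(\mathcal{E}_\sigma(B_j^{(N)}))$; the sentence ``these limits are precisely the entries of $M(t)$'' is false whenever $\mathcal{E}_0(B_j^{(N)})\ne 0$, which happens generically (diagonal entries $\langle\phi_l,B_j\phi_l\rangle$ of a skew-adjoint $B_j$ are purely imaginary but need not vanish, and degenerate eigenvalues contribute further off-diagonal zero-gap entries).

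The paper's proof is precisely the one missing ingredient: replace~\eqref{con-s} by the identity
\begin{equation*}
\nnu J_\xi(\mathcal{E}_\sigma(B_j^{(N)}))=\frac{\bigl(\mathcal{E}_0(B_j^{(N)})+\nnu J_\xi(\mathcal{E}_\sigma(B_j^{(N)}))\bigr)-\bigl(\mathcal{E}_0(B_j^{(N)})+\nnu J_{-\xi}(\mathcal{E}_\sigma(B_j^{(N)}))\bigr)}{2},
\end{equation*}
which expresses the target as a convex combination of two Ces\`aro limits that your own construction can produce, the first with $v_j=1$ and phase $\xi$, the second with $v_j=-1$ and phase $-\xi$ (so that $-(\mathcal{E}_0+\nnu J_{-\xi}\mathcal{E}_\sigma)$ is reached). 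This is exactly where the hypothesis $U_j=[-\delta,\delta]$ of Theorem~\ref{metalemma-tracking} is used: $v_j=-1$ must be admissible. Your remaining observations (taking $\alpha\equiv 0$ since $A^{(N)}\notin\nnu\mathcal{V}_{n,N}$; choosing the prescribed $\xi$ in Lemma~\ref{lem:convexification}; the automatic conjugate symmetry on the $\pm\sigma$ gaps matching the definition of $J_\xi$; the smoothing of $Y$ into $\omega_h$ via~\cite[Proposition~5.5]{ancoranoi}) are correct and line up with the paper.
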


\begin{proof}
The proof is almost identical to that of Lemma~\ref{lem:0000} in the case $M(t)=\nnu\mathcal{E}_\sigma(B_j^{(N)})$. The only difference is in replacing \eqref{con-s} by
$$
\nnu J_\xi(\mathcal{E}_\sigma(B_j^{(N)}))=\frac{(\mathcal{E}_0(B_j^{(N)})+\nnu J_\xi(\mathcal{E}_\sigma(B_j^{(N)})))-(\mathcal{E}_0(B_j^{(N)})+\nnu J_{-\xi}(\mathcal{E}_\sigma(B_j^{(N)})))}2.
$$
We then apply the same convexification argument. 
\end{proof}

{
As in the previous section, 
the lemma above and~\cite[Lemma~8.2]{book2} imply that
\begin{align*}
\Big\| 
 \rexp{t}{
 \left(\alpha(s)A^{(N)}+\Theta^{(N)}(\omega_{h}(s),
 v_{1}(s),\ldots,v_{p}(s))\right)}{s}
 - \rexp{t}{M(s)}{s}
 \Big\|
 \to 0
\end{align*}
uniformly with respect to $t \in [0,T]$  as $h$ tends to infinity. 
}

In analogy with Section~\ref{sec:infinitedimension}
we can conclude the proof of Theorem~\ref{metalemma-tracking} thanks to the proposition below, which states that we can pass to the limit as $N$ tends to infinity without losing the tracking property of the finite-dimensional Galerkin approximations. Its proof is basically the same as that of Proposition~\ref{prop:2410}.

\begin{proposition}\label{prop:2410-t}
Let $n \in \NN$ and $M:[0,T] \to \nnu\mathcal{V}_{n}$ be   piecewise constant. Then, for every $\varepsilon>0$, there exist piecewise constant controls $z:[0,T] \to [1,\infty)$ and $v:[0,T] \to U$
and a continuous piecewise affine function $\omega$ with $\dot \omega \geq 1$ almost everywhere
such that
the propagator $\Psi$ of~\eqref{eq:main-interaction} satisfies
$$
\left\| 
\rexp{t}{M(s)}{s}   -  
\crop_{n} (\Psi_{t}) 
\right\|
<\varepsilon,
$$
for every 
$t \in [0,T]$.
\end{proposition}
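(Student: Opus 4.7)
The plan is to mimic the proof of Proposition~\ref{prop:2410} almost verbatim, replacing the class $\mathcal{W}_n$ (respectively $\mathcal{W}_{n,N}$) by $\nnu\mathcal{V}_n$ (respectively $\nnu\mathcal{V}_{n,N}$) and using Lemma~\ref{lem:0000-t} in place of Lemma~\ref{lem:0000}. Fix $\mu>0$ to be adjusted at the end. Since $\phi_j\in D(B_l)$, the sequence $((B_l)_{jk})_{k\in\NN}$ lies in $\ell^2$ for every $j\in \NN$ and every $l=1,\ldots,p$, so one can choose $N\geq n$ large enough that $\|((B_l)_{jk})_{k>N}\|_{\ell^2}<\mu$ for $j=1,\dots,n$ and $l=1,\dots,p$.

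Next, lift $M$ to a piecewise constant $\hat M:[0,T]\to \nnu\mathcal{V}_{n,N}$ obtained by replacing each value $\nnu J_\xi(\mathcal{E}_\sigma(B_j^{(n)}))$ taken by $M$ with $\nnu J_\xi(\mathcal{E}_\sigma(B_j^{(N)}))$. The crucial structural point is that, by the very definition of $\Xi_n$, for every $(\sigma,j)\in\Xi_n$ the matrix $\mathcal{E}_\sigma(B_j^{(N)})$ is block-diagonal with the upper-left $n\times n$ block coinciding with $\mathcal{E}_\sigma(B_j^{(n)})$. Since $J_\xi$ acts entrywise with factors depending only on the pair of eigenvalues labeling row and column, the same block-diagonal structure is preserved by $J_\xi$. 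Consequently
$$
\rexpi{s}{t}{\hat M(\tau)}{\tau}=\left(\begin{array}{c|c}\displaystyle\rexpi{s}{t}{M(\tau)}{\tau} & 0 \\[3mm] \hline\\[-3mm] 0 & *\end{array}\right),
$$
so in particular $\crop_n\rexp{t}{\hat M(s)}{s}=\rexp{t}{M(s)}{s}$ for every $t\in[0,T]$.

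Now, given $\eta>0$, Lemma~\ref{lem:0000-t} together with \cite[Lemma~8.2]{book2} provides piecewise constant controls $\alpha:[0,T]\to\{0,1\}$, $v:[0,T]\to U$ and a piecewise affine $\omega$ with $\dot\omega\geq 1$ such that
$$
\left\|\rexp{t}{\bigl(\alpha(s)A^{(N)}+\Theta^{(N)}(\omega(s),v_1(s),\ldots,v_p(s))\bigr)}{s}-\rexp{t}{\hat M(s)}{s}\right\|<\eta
$$
uniformly for $t\in[0,T]$. Let $\Psi$ be the propagator of \eqref{eq:main-interaction} associated with these $\alpha,\omega,v$, and set $Q^{(k)}_t=\crop_k\Psi_t$. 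Then
$$
\dot Q^{(N)}_t=\bigl(\alpha A^{(N)}+\Theta^{(N)}(\omega,v_1,\ldots,v_p)\bigr)Q^{(N)}_t+R^{(N)}_t,
$$
where the choice of $N$ yields $|(R^{(N)}_t)_{j,k}|\leq \mu$ for $j\leq n$, $k\leq N$, while $\|R^{(N)}_t\|$ admits a uniform bound $C$ depending on $N$ (hence on $\mu$) but not on $\eta$.

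The variation-of-constants formula, combined with $\crop_n Q^{(N)}_t=\crop_n\Psi_t$ and the identity $\crop_n\rexp{t}{\hat M(s)}{s}=\rexp{t}{M(s)}{s}$, then yields exactly the estimate obtained in the proof of Proposition~\ref{prop:2410}, namely
$$
\left\|\crop_n\Psi_t-\rexp{t}{M(s)}{s}\right\|\leq T\bigl(\eta C+\sqrt{n}\,\mu\bigr)
$$
uniformly in $t\in[0,T]$. One concludes by first choosing $\mu$ small relative to $n$ and $T$ and then $\eta$ small relative to $C=C(\mu)$ and $T$. No genuinely new difficulty arises; the only point requiring care is the verification that the block-diagonal sparsity encoded in $\Xi_n$ survives the application of $J_\xi$, which it does because $J_\xi$ only rescales nonzero entries without moving them. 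This is the step that makes the argument of Proposition~\ref{prop:2410} transfer cleanly to the driftless setting used for tracking.
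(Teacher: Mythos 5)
Your proposal is correct and follows exactly the route the paper intends: the paper's own proof of Proposition~\ref{prop:2410-t} is simply the remark that ``its proof is basically the same as that of Proposition~\ref{prop:2410},'' and you carry out precisely that adaptation, substituting $\nnu\mathcal{V}_{n}$, $\nnu\mathcal{V}_{n,N}$, and Lemma~\ref{lem:0000-t} for $\mathcal{W}_n$, $\mathcal{W}_{n,N}$, and Lemma~\ref{lem:0000}. Your explicit check that $J_\xi$ acts entrywise and therefore preserves the block-diagonal sparsity encoded in the definition of $\Xi_n$ is the one point that genuinely needs verifying, and you verify it correctly.
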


\begin{remark}
The hypothesis that each $U_j$ contains $0$ in its interior can be relaxed.  
Indeed,  up to reordering, let $p'$ be such that $U_j=[0,\delta]$ if $j=1,\dots,p'$ and $U_j=[-\delta,\delta]$ for $j>p'$. 
Assume that,  for every $j\in\{1,\dots,p'\}$, if
 $l\neq k$ are such that $\lambda_{l} = \lambda_{k}$, then 
$\langle \phi_{l},B_{j}\phi_{k} \rangle = 0$. 
Assume, moreover,  that the \CCC\ is satisfied with $\mathcal{V}_n$ replaced by 
the set of all matrices 
$J_\xi(\mathcal{E}_\sigma(B^{(n)}_j))$ with $(\sigma,j)\in \Xi_n$, $\xi\in S^1$, $\sigma\ne 0$, 
and either $j>p'$ or
the following holds: 
if $l,k,l',k'\in\{1,\dots,n\}$ satisfy 
$\lambda_l-\lambda_k=\lambda_{l'}-\lambda_{k'}=\sigma$ and 
$$\langle \phi_{l},B_{j}\phi_{k} \rangle \ne 0 \ne \langle \phi_{l'},B_{j}\phi_{k'} \rangle $$
then 
$$\langle \phi_{l},B_{j}\phi_{l} \rangle-\langle \phi_{k},B_{j}\phi_{k} \rangle = \langle \phi_{l'},B_{j}\phi_{l'} \rangle-\langle \phi_{k'},B_{j}\phi_{k'} \rangle.$$

In this case the proof of 
Lemma~\ref{lem:0000-t} becomes more technically involved. 
The point is that, even if $\mathcal{E}_0(B_j^{(N)})$ cannot be eliminated by convexification, it is a diagonal matrix by hypothesis. Hence, it can be used to define a new interaction framework.
The sequence $(\omega_h(\cdot))_h$ can then be constructed by following 
\cite[Proposition 5.5]{ancoranoi}.
\end{remark}

\section{Proof of Theorem~\ref{thm:controlHs}}\label{6}

First, let us prove \Lg\ approximate simultaneous controllability in $s/2$-norm for initial and final data in $\mathcal{L}$. Namely we want to prove that, for $r \in \NN$, $\psi_1,\ldots,\psi_r \in \mathcal{L}$, and $\hat{\Upsilon} \in \mathbf{U}(\H)$ 
 with $\hat\Upsilon \psi_{1}, \ldots, \hat\Upsilon \psi_{r} \in \mathcal{L}$,
 there exists $K>0$ such that the following holds: For every $\varepsilon>0$ there exists a control $u$, with $\|u\|_{L^{1}} \leq K$ such that 
\begin{equation}\label{eq:2200}
\|\psi_{j} -  \Upsilon_{T}^{u}\psi_{j}\|_{s/2} < \varepsilon \qquad j = 1,\ldots,r.
\end{equation}

 Let $N$  be such that  $\psi_1,\ldots,\psi_r$ and  $\hat\Upsilon \psi_{1}, \ldots, \hat\Upsilon \psi_{r}$ are in $\mathrm{span}\{\phi_{1},\ldots,\phi_{N}\}$.
Note that on $\mathrm{span}\{\phi_{1},\ldots,\phi_{N}\}$ we have 
\begin{equation}\label{eq:equivalenza}
\|\psi\|_{s/2}  = \left(\sum_{k=1}^{N} |\lambda_{k}|^{s} |\langle \phi_{k},\psi \rangle|^{2}\right)^{1/2} 
\leq (\max\{|\lambda_{1}|,|\lambda_{N}|\})^{s/2} \|\psi\|.
\end{equation}
Since the system is \Lg\ approximately simultaneously controllable
there exists $K >0$ such that for every $\varepsilon>0$ there exists a piecewise constant control $u$, with $\|u\|_{L^{1}} \leq K$ such that
$$
\|\psi_{j} -  \Upsilon_{T}^{u}\psi_{j}\| < \varepsilon \qquad j = 1,\ldots,r.
$$
Hence, by~\eqref{eq:equivalenza}, we deduce  
\Lg\ approximate simultaneous controllability in $s/2$-norm in $\mathcal{L}$. 

Now let $\psi_1,\ldots,\psi_r \in D(|A|^{s/2})$, and $\hat{\Upsilon} \in \mathbf{U}(\H)$ be such that 
$\hat\Upsilon \psi_{1}, \ldots, \hat\Upsilon \psi_{r} \in D(|A|^{s/2})$.  Let $\varepsilon > 0$ and consider $\psi_{1}^{0},\ldots,\psi^{0}_r$  and $\psi_{1}^{1},\ldots,\psi^{1}_r$ in $\mathcal{L}$ such that
$$
\|\psi_{j}^{0} - \psi_j \|_{s/2} < \varepsilon \quad  \mbox{ and } \quad \|\psi_{j}^{1} - \hat\Upsilon\psi_j \|_{s/2}<\varepsilon, 
$$
for $j=1,\ldots,r$.
As proved above (see~\eqref{eq:2200}), there exist $K$, independent on $\varepsilon$, and a piecewise constant control $u$ with $\|u\|_{L^{1}} \leq K$ such that
$$
\|\psi_{j}^{1} -  \Upsilon_{T}^{u}\psi_{j}^{0}\|_{s/2} < \varepsilon \qquad j = 1,\ldots,r.
$$
By \cite[Proposition~2]{weakly}, since the system is $s$-weakly coupled, there exists a constant $C$ depending only on $s$ and $A,B_{1},\ldots,B_{p}$ such that
$$
\|\Upsilon_{T}^{u} \psi \|_{s/2} \leq C^{K}\|\psi\|_{s/2},
$$
for every $\psi \in D(|A|^{s/2})$.
Therefore
\begin{align*}
\|\Upsilon_{T}^{u}(\psi_{j}) - \hat\Upsilon(\psi_{j})\|_{s/2} & 
\leq \|\Upsilon_{T}^{u}(\psi_{j} - \psi^{0}_{j})\|_{s/2} + \| \Upsilon_{T}^{u}\psi_{j}^{0} - \psi_{j}^{1} \|_{s/2} + \|\psi_{j}^{1} - \hat\Upsilon\psi_j \|_{s/2}\\
& \leq (C^{K}+2) \varepsilon,
\end{align*}
for $j=1,\ldots,r$.

\begin{remark}
Using arguments similar to those of the proof of Theorem~\ref{metalemma} and of Theorem~\ref{thm:controlHs} it is possible to prove a finer statement 
 than Corollary~\ref{cor:qwerty}. Indeed it is possible to prove that a system satisfying Assumptions~($\mathbb{A}'$), the \CSC, and which is $s$-weakly coupled is a stalker for the $s/2$-norm.
This is due to the fact that, actually, the \CSC implies stalking in $\mathcal{L}$ with a uniform bound on the $L^{1}$ norm of the control. 
\end{remark}

\bibliographystyle{alpha}
\bibliography{biblio}

\newcommand{\etalchar}[1]{$^{#1}$}
\begin{thebibliography}{BCM{\etalchar{+}}09}

\bibitem[AB05]{Boscain_Adami}
Riccardo Adami and Ugo Boscain.
\newblock Controllability of the {S}chr\"odinger equation via intersection of
  eigenvalues.
\newblock In {\em Proceedings of the 44th IEEE Conference on Decision and
  Control}, pages 1080--1085, 2005.

\bibitem[AS04]{book2}
Andrei~A. Agrachev and Yuri~L. Sachkov.
\newblock {\em Control theory from the geometric viewpoint}, volume~87 of {\em
  Encyclopaedia of Mathematical Sciences}.
\newblock Springer-Verlag, Berlin, 2004.
\newblock Control Theory and Optimization, II.

\bibitem[BBR10]{brockettetc}
Anthony~M. Bloch, Roger~W. Brockett, and Chitra Rangan.
\newblock Finite controllability of infinite-dimensional quantum systems.
\newblock {\em IEEE Trans. Automat. Control}, 55(8):1797--1805, 2010.

\bibitem[BC06]{beauchard-coron}
Karine Beauchard and Jean-Michel Coron.
\newblock Controllability of a quantum particle in a moving potential well.
\newblock {\em J. Funct. Anal.}, 232(2):328--389, 2006.

\bibitem[BCC]{weakly}
N.~Boussa\"id, M.~Caponigro, and T.~Chambrion.
\newblock Weakly-coupled systems in quantum control.
\newblock To appear on IEEE Trans. Automat. Control.

\bibitem[BCCS12]{ancoranoi}
U.~Boscain, M.~Caponigro, T.~Chambrion, and M.~Sigalotti.
\newblock A weak spectral condition for the controllability of the bilinear
  {S}chr\"odinger equation with application to the control of a rotating planar
  molecule.
\newblock {\em Comm. Math. Phys.}, 311(2):423--455, 2012.

\bibitem[BCG{\etalchar{+}}02]{Gauthier_Jauslin}
Ugo Boscain, Gr{\'e}goire Charlot, Jean-Paul Gauthier, St{\'e}phane Gu{\'e}rin,
  and Hans-Rudolf Jauslin.
\newblock Optimal control in laser-induced population transfer for two- and
  three-level quantum systems.
\newblock {\em J. Math. Phys.}, 43(5):2107--2132, 2002.

\bibitem[BCM{\etalchar{+}}09]{noicdc}
Ugo Boscain, Thomas Chambrion, Paolo Mason, Mario Sigalotti, and Dominique
  Sugny.
\newblock Controllability of the rotation of a quantum planar molecule.
\newblock In {\em Proceedings of the 48th IEEE Conference on Decision and
  Control}, pages 369--374, 2009.

\bibitem[BCMS12]{adiabatiko}
Ugo Boscain, Francesca Chittaro, Paolo Mason, and Mario Sigalotti.
\newblock Adiabatic control of the {S}chroedinger equation via conical
  intersections of the eigenvalues.
\newblock {\em IEEE Trans. Automat. Control}, 57(8):1970--1983, 2012.

\bibitem[BL10]{camillo}
Karine Beauchard and Camille Laurent.
\newblock Local controllability of 1{D} linear and nonlinear {S}chr{\"o}dinger
  equations with bilinear control.
\newblock {\em J. Math. Pures Appl.}, 94(5):520--554, 2010.

\bibitem[BMS82]{BMS}
J.~M. Ball, J.~E. Marsden, and M.~Slemrod.
\newblock Controllability for distributed bilinear systems.
\newblock {\em SIAM J. Control Optim.}, 20(4):575--597, 1982.

\bibitem[BN10]{beauchard-nersesyan}
Karine Beauchard and Vahagn Nersesyan.
\newblock Semi-global weak stabilization of bilinear {S}chr\"odinger equations.
\newblock {\em C. R. Math. Acad. Sci. Paris}, 348(19-20):1073--1078, 2010.

\bibitem[Bro72]{brock}
R.~W. Brockett.
\newblock System theory on group manifolds and coset spaces.
\newblock {\em SIAM J. Control}, 10:265--284, 1972.

\bibitem[CBCS11]{Qtrack}
M.~Caponigro, U.~Boscain, T.~Chambrion, and M.~Sigalotti.
\newblock Q-track.
\newblock Software registered at ``{A}gence pour la {P}rotection des
  {P}rogrammes'' IDDN.FR.001.110021.000.S.P.2011.000.10000, 2011.

\bibitem[CH90]{CH90}
C.~E. Carroll and F.~T. Hioe.
\newblock Analytic solutions for three-state systems with overlapping pulses.
\newblock {\em Phys. Rev. A (3)}, 42(3):1522--1531, 1990.

\bibitem[Cha12]{periodic}
Thomas Chambrion.
\newblock Periodic excitations of bilinear quantum systems.
\newblock {\em Automatica}, 48(9):2040--2046, September 2012.

\bibitem[CMSB09]{noi}
Thomas Chambrion, Paolo Mason, Mario Sigalotti, and Ugo Boscain.
\newblock Controllability of the discrete-spectrum {S}chr\"odinger equation
  driven by an external field.
\newblock {\em Ann. Inst. H. Poincar\'e Anal. Non Lin\'eaire}, 26(1):329--349,
  2009.

\bibitem[D'A08]{dalessandro-book}
Domenico D'Alessandro.
\newblock {\em {Introduction to quantum control and dynamics.}}
\newblock {Applied Mathematics and Nonlinear Science Series. Boca Raton, FL:
  Chapman, Hall/CRC.}, 2008.

\bibitem[EAGK96]{AGK96}
R.~El~Assoudi, J.~P. Gauthier, and I.~A.~K. Kupka.
\newblock On subsemigroups of semisimple {L}ie groups.
\newblock {\em Ann. Inst. H. Poincar\'e Anal. Non Lin\'eaire}, 13(1):117--133,
  1996.

\bibitem[EP09]{ervedoza_puel}
Sylvain Ervedoza and Jean-Pierre Puel.
\newblock Approximate controllability for a system of {S}chr\"odinger equations
  modeling a single trapped ion.
\newblock {\em Ann. Inst. H. Poincar\'e Anal. Non Lin\'eaire}, 26:2111--2136,
  2009.

\bibitem[GB82]{GB82}
J.-P. Gauthier and G.~Bornard.
\newblock Controlabilit\'e des syst\`emes bilin\'eaires.
\newblock {\em SIAM J. Control Optim.}, 20(3):377--384, 1982.

\bibitem[JK81]{JK81}
V.~Jurdjevic and I.~Kupka.
\newblock Control systems on semisimple {L}ie groups and their homogeneous
  spaces.
\newblock {\em Ann. Inst. Fourier (Grenoble)}, 31(4):vi, 151--179, 1981.

\bibitem[JS72]{jur}
Velimir Jurdjevic and H{\'e}ctor~J. Sussmann.
\newblock Control systems on {L}ie groups.
\newblock {\em J. Differential Equations}, 12:313--329, 1972.

\bibitem[Mir09]{mirrahimi-aihp}
Mazyar Mirrahimi.
\newblock Lyapunov control of a quantum particle in a decaying potential.
\newblock {\em Ann. Inst. H. Poincar\'e Anal. Non Lin\'eaire},
  26(5):1743--1765, 2009.

\bibitem[MS10]{MasonSigalotti2010}
Paolo Mason and Mario Sigalotti.
\newblock Generic controllability properties for the bilinear {S}chr{\"o}dinger
  equation.
\newblock {\em Communications in Partial Differential Equations}, 35:685--706,
  2010.

\bibitem[Ner09]{Nersy}
Vahagn Nersesyan.
\newblock Growth of {S}obolev norms and controllability of the {S}chr\"odinger
  equation.
\newblock {\em Comm. Math. Phys.}, 290(1):371--387, 2009.

\bibitem[Ner10]{nersesyan}
Vahagn Nersesyan.
\newblock Global approximate controllability for {S}chr\"odinger equation in
  higher {S}obolev norms and applications.
\newblock {\em Ann. Inst. H. Poincar\'e Anal. Non Lin\'eaire}, 27(3):901--915,
  2010.

\bibitem[NN12]{fratelli-nersesyan}
Vahagn Nersesyan and Hayk Nersisyan.
\newblock Global exact controllability in infinite time of {S}chr\"odinger
  equation.
\newblock {\em J. Math. Pures Appl. (9)}, 97(4):295--317, 2012.

\bibitem[PS10]{PriSig10}
Yannick Privat and Mario Sigalotti.
\newblock The squares of the {L}aplacian-{D}irichlet eigenfunctions are
  generically linearly independent.
\newblock {\em ESAIM Control Optim. Calc. Var.}, 16(3):794--805, 2010.

\bibitem[SH06]{seideman}
T.~Seideman and E.~Hamilton.
\newblock Nonadiabatic alignment by intense pulses: concepts, theory and
  directions.
\newblock {\em Adv. At. Mol. Opt. Phys.}, 52:289, 2006.

\bibitem[SKA{\etalchar{+}}04]{sugny}
D.~Sugny, A.~Keller, O.~Atabek, D.~Daems, C.~Dion, S.~Gu\'erin, and H.~R.
  Jauslin.
\newblock Reaching optimally oriented molecular states by laser kicks.
\newblock {\em Phys. Rev. A}, 69:033402, 2004.

\bibitem[SS03]{stapelfeldt}
H.~Stapelfeldt and T.~Seideman.
\newblock Aligning molecules with strong laser pulses.
\newblock {\em Rev. Mod. Phys.}, 75:543, 2003.

\bibitem[Tur00]{turinici}
Gabriel Turinici.
\newblock On the controllability of bilinear quantum systems.
\newblock In M.~Defranceschi and C.~Le~Bris, editors, {\em Mathematical models
  and methods for ab initio Quantum Chemistry}, volume~74 of {\em Lecture Notes
  in Chemistry}. Springer, 2000.

\bibitem[VHBB01]{VHSB01}
NV~Vitanoc, T~Halfmann, Shore BW, and K~Bergmann.
\newblock Laser-induced population transfer by adiabatic passage techniques.
\newblock {\em Annual review of physical chemistry}, 52:763--809, 2001.

\end{thebibliography}

\end{document}